\newcommand{\Babs}[1]{\Bigl\lvert #1\Bigr\rvert}
\newcommand{\nabs}[1]{\lvert #1\rvert}
\newcommand{\babs}[1]{\bigl\lvert #1\bigr\rvert}
\newcommand{\nsset}[1]{\{#1\}} 
\newcommand{\bsset}[1]{\bigl\{#1\bigr\}}
\newcommand{\nparen}[1]{(#1)}
\newcommand{\bparen}[1]{\bigl(#1\bigr)}
\newcommand{\Bparen}[1]{\Bigl(#1\Bigr)}
\newcommand{\map}[3]{#1 \colon #2 \rightarrow #3}
\newcommand{\binlang}{\nsset{0,1}^*}
\newcommand{\nbrack}[1]{[#1]}
\newcommand{\bbrack}[1]{\bigl[#1\bigr]}
\newcommand{\nfloor}[1]{\lfloor #1\rfloor}
\newcommand{\qtext}[1]{\quad\text{#1}\quad}
\newcommand{\qqtext}[1]{\qquad\text{#1}\qquad}
\newcommand{\ntoinf}{n \rightarrow \infty}
\newcommand{\ncset}[2]{\{\,#1\,\colon\,#2\,\}} 
\newcommand{\bcset}[2]{\bigl\{\,#1\,\colon\,#2 \,\bigr\}}
\newcommand{\calA}{\ensuremath{\mathcal{A}}\xspace}
\newcommand{\calS}{\ensuremath{\mathcal{S}}\xspace}
\newcommand{\calT}{\ensuremath{\mathcal{T}}\xspace}
\newcommand{\nnorm}[1]{\lVert #1\rVert}
\newcommand{\binm}{\nsset{0,1}^*}
\newcommand{\bfu}{\ensuremath{\mathbf{u}}\xspace}
\newcommand{\bfv}{\ensuremath{\mathbf{v}}\xspace}
\newcommand{\bfw}{\ensuremath{\mathbf{w}}\xspace}
\newcommand{\bfx}{\ensuremath{\mathbf{x}}\xspace}
\newcommand{\bfy}{\ensuremath{\mathbf{y}}\xspace}
\newcommand{\bfz}{\ensuremath{\mathbf{z}}\xspace}
\newcommand{\N}{\ensuremath{\mathbb{N}}\xspace}
\newenvironment{enumeratei}{\begin{enumerate}[\upshape (i)]}{\end{enumerate}}
\theoremstyle{definition}
\newtheorem{definition}{Definition}
\newtheorem{theorem}[definition]{Theorem}
\newtheorem{corollary}[definition]{Corollary}
\newtheorem{remark}[definition]{Remark}
\newtheorem{lemma}[definition]{Lemma}
\newtheorem{openproblem}{Open problem}
\newcommand{\thue}{\mathbf{TM}_0}
\begin{document}

\title{Avoiding Abelian powers in binary words with bounded Abelian complexity}

\author{Julien Cassaigne
\footnote{Institut de Math\'ematiques de Luminy,
case 907, 163 avenue de Luminy, 13288 Marseille Cedex 9, France  ({\tt cassaigne@iml.univ-mrs.fr})}
\and
Gw\'ena\"el Richomme
\footnote{Univ. Paul-Val\'{e}ry Montpellier 3, UFR IV, Dpt MIAp, Case J11, Rte de Mende, 34199 Montpellier Cedex 5, France.
LIRMM (CNRS, Univ. Montpellier 2), UMR 5506 - CC 477, 161 rue Ada,  34095 Montpellier Cedex 5, France ({\tt gwenael.richomme@univ-montp3.fr})}
\and
Kalle Saari
\footnote{\textbf{Corresponding author}. Department of Mathematics, University of Turku, FI-20014, Turku, Finland ({\tt kasaar@utu.fi})}
\and
Luca Q. Zamboni
\footnote{Universit\'e de Lyon, Universit\'e Lyon 1, CNRS UMR 5208 Institut Camille Jordan, B\^atiment du Doyen Jean Braconnier, 43, blvd du 11 novembre 1918, F-69622 Villeurbanne Cedex, France
({\tt zamboni@math.univ-lyon1.fr}).
Reykjavik University, School of Computer Science, Kringlan 1, 103 Reykjavik, Iceland ({\tt lqz@ru.is}).}
}

\maketitle

\begin{abstract}
The notion of Abelian complexity of infinite words was recently used by the three last authors to investigate various Abelian 
properties of words. In particular, using van der Waerden's theorem, they proved that if a word avoids Abelian $k$-powers for some integer $k$, then  its Abelian complexity is unbounded. This suggests the following question: How frequently do Abelian $k$-powers occur in a word having bounded Abelian complexity? In particular, does every uniformly recurrent word  having bounded Abelian complexity begin in an Abelian $k$-power? While this is true for various classes of uniformly recurrent words, including for example the class of all Sturmian words, in this paper we show the existence of uniformly recurrent  binary words, having bounded Abelian complexity, which admit an infinite number of suffixes which do not begin in an Abelian square.  We also show that the shift orbit closure of any infinite binary overlap-free word contains a word which avoids Abelian cubes in the beginning.
We also consider the effect of morphisms on Abelian complexity and show that the morphic image of a word having bounded Abelian complexity has bounded Abelian complexity. 
Finally, we give an open problem on avoidability of Abelian squares in infinite binary words and show that it is equivalent to a well-known open problem of Pirillo--Varricchio and
Halbeisen--Hungerb\"{u}hler.
\end{abstract}

\indent {\textbf{keywords:} Avoidability in words, Abelian power, Abelian complexity.}\\
\indent MSC (2000): 68R15.

\section{Introduction}

The notion of Abelian complexity of an infinite word was recently developed by the three last authors~\cite{RicSaaZam2010} to study various Abelian properties of words. This gave way to several other interesting results:
While aperiodic balanced binary (i.e., Sturmian) and ternary words are examples of recurrent words with constant Abelian complexity~2 and~3, respectively (see ~\cite{RicSaaZam2010}),  Currie and Rampersad~\cite{CurRam2009} showed that there are no recurrent words with constant Abelian complexity $k\geq4$. On the other hand,
Saarela~\cite{Saarela2010} showed that for any $k\geq 2$ there exist recurrent infinite words whose Abelian complexity is ultimately constant~$k$. 

It is natural to try to determine the Abelian complexity of some well-known infinite words. For Sturmian words, this was essentially done already by Morse and 
Hedlund~\cite{MorHed1940} by showing that these words are balanced. Characterizing the Abelian complexity of the Tribonacci word is substantially harder, and this was only partially achieved by the 
three last authors~\cite{RicSaaZam2010b}. At any rate, the Abelian complexity  of the Tribonacci word is bounded by~7. A similar result for a different class of ternary words was obtained very recently by 
Turek~\cite{Turek2010}. 

Before proceeding any further, let us quickly recall some basic terminology.
Let $A$ be a finite alphabet. 
For each word $x\in A^*$, we denote the length of $x$  by $\nabs{x}$ and  the number of  occurrences of a letter $b\in A$ in $x$ by $\nabs{x}_b$.
If $A = \nsset{a_1, a_2, \ldots, a_k}$,  the \emph{Parikh vector} of  $x$ is $ \Psi(x) := \nparen{\nabs{x}_{a_1}, \ldots, \nabs{x}_{a_k}}$.
If two words $x,y \in A^*$ have the same Parikh vector, then we say that they are \emph{Abelian equivalent}, and we express this relation by writing $x \sim_{ab} y$.
The empty word is denoted by~$\epsilon$.

An \emph{infinite word} is an expression  $\bfx = a_0 a_1 a_2 a_3\cdots$, where $a_i \in A$; occasionally we index infinite words starting from 1 instead of~0.
An infinite word of the form $a_{n} a_{n+1} a_{n+2}\cdots$, where $n\geq 1$, is called a \emph{suffix} of~$\bfx$; 

A finite word $x$ is a \emph{subword} of $\bfx$ if we have  $x = a_n a_{n+1} \cdots a_{n+\nabs{x} - 1}$ for some integer $n\geq 1$.
An infinite word $\bfx$ is called \emph{$C$-balanced}, for some $C>0$,  if all subwords $x,y$ of $\bfx$ with $\nabs{x} = \nabs{y}$ satisfy 
$\babs{\nabs{x}_{b} - \nabs{y}_{b}} \leq C$ for all letters $b\in A$. If $C=1$, then we simply say that $\bfx$  is \emph{balanced}.

The \emph{subword complexity} of $\bfx$ is the mapping $\map{\rho_\bfx}{\N}{\N}$ for which $\rho_\bfx(n)$ is the number of distinct subwords of length~$n$.
The \emph{Abelian complexity} of $\bfx$ is the mapping $\map{\rho_\bfx^{ab}}{\N}{\N}$ for which $\rho_\bfx^{ab}(n)$ is the number of subwords of length~$n$ that are pairwise Abelian inequivalent.
In~\cite{RicSaaZam2010}, the three last authors showed that Abelian complexity is closely linked to the notion of balance:

\begin{lemma}[\cite{RicSaaZam2010}] \label{220220101341}
An infinite word has bounded Abelian complexity if and only if it is $C$-balanced for some $C>0$.
\end{lemma}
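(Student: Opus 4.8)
The plan is to prove the two implications separately; the direction from $C$-balance to bounded Abelian complexity is an easy counting argument, while the converse rests on a short combinatorial observation about sliding windows.

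First, suppose $\bfx$ is $C$-balanced over an alphabet $A$ with $\nabs{A}=k$. Fix $n\in\N$ and choose any subword $w_0$ of $\bfx$ of length $n$. For every subword $w$ of $\bfx$ with $\nabs{w}=n$ and every letter $b\in A$, $C$-balance gives $\babs{\nabs{w}_b-\nabs{w_0}_b}\le C$, so $\nabs{w}_b$ assumes at most $2C+1$ values. Hence the Parikh vector $\Psi(w)$ ranges over at most $(2C+1)^k$ possibilities, and since Abelian inequivalent words have distinct Parikh vectors, $\rho_\bfx^{ab}(n)\le(2C+1)^k$ for every $n$. Thus $\rho_\bfx^{ab}$ is bounded.

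For the converse, suppose $\rho_\bfx^{ab}(n)\le B$ for all $n$, and fix a letter $b\in A$ and a length $n$. Let $S_n=\bset{\nabs{w}_b}{w\text{ a subword of }\bfx,\ \nabs{w}=n}$; this is a nonempty finite subset of $\nsset{0,1,\dots,n}$, so $m:=\min S_n$ and $M:=\max S_n$ are well defined. The key step is to show $S_n$ equals the full integer interval $[m,M]$. Pick subwords $u,v$ of $\bfx$ with $\nabs{u}_b=m$ and $\nabs{v}_b=M$, and let $w$ be a subword of $\bfx$ long enough to contain an occurrence of $u$ and an occurrence of $v$; such a $w$ exists because $\bfx$ is a single infinite word, so any two of its subwords occur jointly inside some finite subword of $\bfx$. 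Now slide a window of length $n$ through $w$: moving from one window to the next deletes one letter on the left and appends one on the right, so the number of $b$'s it contains changes by at most $1$. Since this count equals $m$ at the window covering the occurrence of $u$ and equals $M$ at the window covering $v$, the discrete intermediate value theorem shows it attains every value in $[m,M]$; each such window is a subword of $\bfx$ of length $n$, so indeed $S_n=[m,M]$. Consequently $M-m+1=\nabs{S_n}\le\rho_\bfx^{ab}(n)\le B$, because subwords with different numbers of $b$'s are Abelian inequivalent. Therefore $\babs{\nabs{x}_b-\nabs{y}_b}\le M-m\le B-1$ for all subwords $x,y$ of $\bfx$ of length $n$; as $b$ and $n$ were arbitrary, $\bfx$ is $(B-1)$-balanced.

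The step that I expect to require the most care is the claim that $S_n$ is an integer interval: one must state correctly why occurrences of the two extremal subwords can be captured inside a common finite subword of $\bfx$, and check that the window count varies by at most one at each shift. Beyond that bookkeeping, no serious obstacle is anticipated.
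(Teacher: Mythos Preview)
Your argument is correct and is essentially the standard proof of this equivalence. However, there is nothing to compare it against: the paper does not prove Lemma~\ref{220220101341} at all. The lemma is stated with a citation to~\cite{RicSaaZam2010} and used as a black box throughout; no proof or sketch appears in the present paper. So your write-up supplies a proof where the paper simply imports the result.

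For the record, both directions you give are sound. The forward direction is the obvious counting bound $(2C+1)^{\#A}$. In the converse you correctly identify the only nontrivial point, namely that the set $S_n=\bcset{\nabs{w}_b}{\nabs{w}=n}$ is a full integer interval, and your sliding-window argument handles it cleanly: any two length-$n$ subwords occur at some positions $i\le j$ of $\bfx$, and the map $k\mapsto\nabs{x_k x_{k+1}\cdots x_{k+n-1}}_b$ changes by at most $1$ as $k$ increases by $1$, so it hits every value between its endpoints. The conclusion that $\bfx$ is $(B-1)$-balanced with $B=\sup_n\rho_\bfx^{ab}(n)$ is then immediate.
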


A nonempty word $w$ is an \emph{Abelian $k$-power}, where $k\geq 2$ is an integer, if it can be written in the form $w = u_1 u_2 \cdots u_k$ with the $u_i$ pairwise Abelian equivalent. Then we also say that $w$ has (Abelian) period $\nabs{u_{1}}$.
According to a classical result, an infinite word is ultimately periodic if and only if its subword complexity is bounded.
The next result, which relies on the well-known van der Waerden's theorem, may be considered as a partial Abelian analogue of this.

\begin{theorem}[\cite{RicSaaZam2010}]
If an infinite word has bounded Abelian complexity, then it contains Abelian $k$-powers for all integers $k\geq1$. 
\end{theorem}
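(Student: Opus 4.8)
The plan is to obtain the Abelian $k$-power by a single application of van der Waerden's theorem, after first arranging that the only ``irrationality'' in the picture --- the letter frequencies --- does no harm. By Lemma~\ref{220220101341} we may assume $\bfx = x_0x_1x_2\cdots$ is $C$-balanced for some integer $C\ge 1$, over an alphabet $A$. For a letter $b$ set $f_b(n) = \nabs{x_0\cdots x_{n-1}}_b$. Comparing two length-$n$ factors of $\bfx$, at positions $0$ and $m$, shows $\babs{f_b(m+n)-f_b(m)-f_b(n)}\le C$, so $f_b$ is approximately additive; a Fekete-type argument then gives that the frequency $\alpha_b := \lim_n f_b(n)/n$ exists and satisfies $\babs{f_b(n)-n\alpha_b}\le C$ for all $n$. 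I would record the elementary reduction first: it suffices to produce $p\ge 0$ and $D\ge 1$ for which the prefix Parikh vectors $\Psi(x_0\cdots x_{p+jD-1})$, $j=0,1,\dots,k$, lie in arithmetic progression in $\mathbb{Z}^{|A|}$, since then the $k$ consecutive blocks $x_{p+(j-1)D}\cdots x_{p+jD-1}$ all have Parikh vector equal to the common difference of that progression, i.e.\ $x_p\cdots x_{p+kD-1}$ is an Abelian $k$-power of period $D$.

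Fix $k$, and let $N$ be large enough that every colouring of $\{0,1,\dots,N\}$ by $(2C+1)^{|A|}$ colours has a monochromatic $(k+1)$-term arithmetic progression (van der Waerden). The crucial step is the choice of \emph{scale}: by Dirichlet's simultaneous approximation theorem pick $d\ge 1$ so that each $d\alpha_b$ lies within $1/N$ of an integer $M_b$. Then for $0\le m\le N$ we get $\babs{f_b(md)-mM_b}\le\babs{f_b(md)-md\alpha_b}+m\babs{d\alpha_b-M_b}<C+1$, so $f_b(md)-mM_b\in\{-C,\dots,C\}$. Hence $m\mapsto\bigl(f_b(md)-mM_b\bigr)_{b\in A}$ is a colouring of $\{0,\dots,N\}$ by at most $(2C+1)^{|A|}$ colours, and van der Waerden supplies $m_0\ge 0$, $e\ge 1$ with $m_0+ke\le N$ along which this colour is constant, say equal to $(c_b)_{b\in A}$. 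That constancy says exactly that $f_b\bigl((m_0+je)d\bigr)=c_b+m_0M_b+j\,eM_b$ is an affine function of $j$, simultaneously for all $b$; equivalently $\Psi(x_0\cdots x_{(m_0+je)d-1})$ runs through an arithmetic progression as $j=0,\dots,k$. Taking $p=m_0d$ and $D=ed$ then finishes the argument via the reduction above.

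The step I expect to be the real obstacle is precisely the one the scale-choice is designed to circumvent. The naive approach --- colour a position $n$ by $\Psi(x_0\cdots x_{n-1})$ minus its ``expected'' value, e.g.\ by $f_b(n)-\lfloor n\alpha_b\rfloor$, and hope for a monochromatic progression --- fails, because $f_b(n)-\lfloor n\alpha_b\rfloor$ is \emph{not} bounded along arithmetic progressions when $\alpha_b$ is irrational: the fractional parts $\{n\alpha_b\}$ equidistribute, so this ``colouring'' uses infinitely many colours and van der Waerden is inapplicable. Passing to a scale $d$ for which every $d\alpha_b$ is nearly integral makes the drift of the prefix Parikh vectors genuinely affine, up to an additive error of only $C$, over exactly the finite window $[0,Nd]$ that van der Waerden needs --- which is what reduces the whole statement to a finite colouring problem. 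The remaining ingredient, that a $C$-balanced word has letter frequencies with $O(1)$ discrepancy, requires only the routine Fekete argument sketched above.
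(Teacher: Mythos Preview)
Your argument is correct (modulo a harmless off-by-one: the triangle inequality gives $\nabs{f_b(md)-mM_b}\le C+1$ rather than $<C+1$, so either allow $(2C+3)^{|A|}$ colours or invoke Dirichlet with $N+1$ in place of $N$). Note that the present paper does not actually prove this theorem; it is quoted from~\cite{RicSaaZam2010}, and the only indication given here is that the proof ``relies on van der Waerden's theorem''.

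Compared with that, your route is more elaborate than necessary: the detour through letter frequencies and Dirichlet's theorem can be eliminated. The obstruction you worry about---that $f_b(n)-\lfloor n\alpha_b\rfloor$ is unbounded when $\alpha_b$ is irrational---vanishes if one reduces modulo $C+1$ instead of subtracting a linear term. Colour $n\in\N$ by $\Psi(x_0\cdots x_{n-1})\bmod(C+1)$, a finite colouring with $(C+1)^{|A|}$ colours; van der Waerden yields a monochromatic $(k+1)$-term progression $n_0,n_0+D,\ldots,n_0+kD$, so each block Parikh vector $Q_j:=\Psi(x_{n_0+(j-1)D}\cdots x_{n_0+jD-1})$ is $\equiv 0\pmod{C+1}$. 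But $C$-balancedness forces any two $Q_j$ to differ by at most $C$ in each coordinate, and an integer of absolute value at most $C$ that is divisible by $C+1$ is zero; hence all $Q_j$ coincide. This is the argument of~\cite{RicSaaZam2010} (and it is what the paper reuses in Section~\ref{201003121133} with Szemer\'edi in place of van der Waerden). What it buys over your approach is the removal of both the Fekete/frequency analysis and the Diophantine step, leaving van der Waerden as the sole nontrivial ingredient; what your approach buys is an explicit description of the common Parikh vector of the blocks, namely $(eM_b)_{b\in A}$, which the modular argument does not provide.
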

\noindent  This result naturally gives rise to the following question:
\begin{quote}
\textit{Does every uniformly recurrent word with bounded Abelian complexity begin in an Abelian $k$-power for every $k$?}
\end{quote}
It turns out that the Thue--Morse word, or more generally the infinite binary overlap-free words, nicely shed light to this question.
Recall that the Thue--Morse word $\thue = t_0 t_1 t_2 \cdots $ is the fixed point of the morphism $\map{\mu}{\binlang}{\binlang}$ starting with the letter~0; the binary overlap-free words are characterized by the absence of subwords of the form $axaxa$, where $a\in \binlang$.
As is well-known, the letter $t_n$  is given by the number of bits 1 in the binary expansion of $n$  modulo~2.
The Abelian complexity of the Thue--Morse word is obtained as a corollary of the following result.

\begin{theorem}[\cite{RicSaaZam2010}]\label{220220101336}
An aperiodic infinite word $\bfx$ has Abelian complexity
\[
\rho_{\bfx}^{{ab}}(n) =
\begin{cases}
2 & \text{if $n$ is odd;} \\
3 & \text{if $n$ is even} \\
\end{cases}
\]
if and only if $\bfx$ is of the form $0\mu(\bfy)$, $1\mu(\bfy)$, or $\mu(\bfy)$, where $\mu$ is the Thue--Morse morphism and $\bfy$ is any infinite binary word.
\end{theorem}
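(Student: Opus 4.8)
The plan is to prove both implications by controlling, for each length~$n$, the finite set $V_{\bfx}(n):=\bset{\nabs{u}_0}{u\text{ a subword of }\bfx,\ \nabs{u}=n}$. The starting point is that $\rho_{\bfx}^{ab}(1)=2$ forces $\bfx$ to be over a two-letter alphabet, which I name $\nsset{0,1}$; then the Parikh vector of a length-$n$ subword is determined by its number of~$0$'s, so $\rho_{\bfx}^{ab}(n)$ equals the number of elements of $V_{\bfx}(n)$. Two elementary facts drive the argument: sliding a length-$n$ window by one position changes $\nabs{u}_0$ by at most~$1$, so $V_{\bfx}(n)$ is always an \emph{interval} of integers; and a short monotonicity argument---passing between a subword and its one-letter extensions or restrictions---shows that $\min V_{\bfx}$ and $\max V_{\bfx}$ each change by $0$ or $1$ when $n$ increases by~$1$. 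Hence, once the alternating values $\rho_{\bfx}^{ab}(n)\in\nsset{2,3}$ are known, the jumps $\rho_{\bfx}^{ab}(n+1)-\rho_{\bfx}^{ab}(n)=\pm1$ pin down both ends of every such interval, yielding $\min V_{\bfx}(n)=\nfloor{(n-1)/2}$ and $\max V_{\bfx}(n)=\min V_{\bfx}(n)+\rho_{\bfx}^{ab}(n)-1$ for all $n\geq1$. I would also record the trivial remark that, writing $\bfx=x_0x_1x_2\cdots$, the word $\bfx$ has the form $\mu(\bfy)$ (resp.\ $0\mu(\bfy)$ or $1\mu(\bfy)$) precisely when $x_{2i}\neq x_{2i+1}$ for every~$i$ (resp.\ $x_{2i+1}\neq x_{2i+2}$ for every~$i$), since every length-$2$ block of a $\mu$-image is $01$ or $10$.

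For the ``if'' direction I would first treat $\bfx=\mu(\bfy)$; the prefixed variants then follow because $\mu(\bfy)$ is a suffix of $\bfx$, so $\rho_{\bfx}^{ab}\geq\rho_{\mu(\bfy)}^{ab}$ pointwise, while $\bfx$ has just one extra subword of each length, whose Parikh vector I would check already occurs. Since $\bfx$ is aperiodic, so is $\bfy$ (a periodic $\bfy$ gives a periodic $\mu(\bfy)$); in particular $\bfy$ has infinitely many~$0$'s and infinitely many~$1$'s, and for every $m\geq1$ there are positions $j,j'$ with $y_j=1,\ y_{j+m}=0$ and $y_{j'}=0,\ y_{j'+m}=1$---otherwise some residue class of $\bfy$ modulo~$m$ would be eventually constant, making $\bfy$ eventually periodic. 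Any length-$2m$ subword of $\bfx$ is either a concatenation of $m$ full $\mu$-blocks, or else a single letter, then $m-1$ full $\mu$-blocks, then a single letter; since each full block contributes $(1,1)$ to the Parikh vector, this vector is $(m,m)$ in the first case and $(m-1,m-1)$ plus the two boundary letters in the second, hence lies in $\nsset{(m-1,m+1),(m,m),(m+1,m-1)}$; similarly a length-$(2m+1)$ subword has Parikh vector in $\nsset{(m+1,m),(m,m+1)}$. This gives the upper bounds $\rho_{\bfx}^{ab}(2m)\leq3$ and $\rho_{\bfx}^{ab}(2m+1)\leq2$; for the matching lower bounds, $(m,m)$ arises from any aligned window, and the positions $j,j'$ let me prescribe the boundary letters so as to realise all the remaining Parikh vectors at each parity.

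For the ``only if'' direction I start from the values of $V_{\bfx}$ computed above. Since $\min V_{\bfx}(3)=1$ and $\max V_{\bfx}(3)=2$, no subword of $\bfx$ equals $000$ or $111$, i.e.\ every maximal run of $\bfx$ has length $1$ or $2$; and $\rho_{\bfx}^{ab}(2)=3$ forces both $00$ and $11$ to occur, so runs of length~$2$ (``long runs'') of both letters appear. Writing $\bfx$ as the concatenation of its maximal runs, the parity of a run's starting position flips precisely when one crosses a length-$1$ run and is preserved across a long run; hence it suffices to show that between any two consecutive long runs there is an \emph{even} number of length-$1$ runs, for then all long runs start at positions of one fixed parity and the block remark of the first paragraph gives $\bfx=\mu(\bfy)$ or $\bfx=a\mu(\bfy)$ with $a\in\nsset{0,1}$. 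Suppose not: then some two consecutive long runs have $2\ell+1$ length-$1$ runs between them, and reading off the runs---consecutive short runs alternate letters---forces these runs together to spell either $00\,(10)^{\ell}\,1\,00$ or $11\,(01)^{\ell}\,0\,11$, so in particular the two long runs carry the \emph{same} letter. In the first case this is a subword of length $2\ell+5$ containing $\ell+4$ letters~$0$, contradicting $\max V_{\bfx}(2\ell+5)=\ell+3$; in the second it contains $\ell+1$ letters~$0$, contradicting $\min V_{\bfx}(2\ell+5)=\ell+2$. Either contradiction completes the proof.

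I expect the main obstacle to be neither the Parikh-vector bookkeeping for $\mu$-images nor the little lemma that an aperiodic binary word carries a~$1$ at distance~$m$ before some~$0$, but the two steps carrying the real content: extracting the exact values of $\min V_{\bfx}(n)$ and $\max V_{\bfx}(n)$ from the bare alternation of $\rho_{\bfx}^{ab}$, and recognising that a parity mismatch between consecutive long runs is precisely what manufactures the extremal subwords $00(10)^{\ell}100$ and $11(01)^{\ell}011$. Some care is also needed with degenerate cases of the run decomposition---for instance two consecutive long runs cannot be \emph{adjacent} when they carry the same letter, since $0000$ and $1111$ are forbidden---but these should cause no serious trouble.
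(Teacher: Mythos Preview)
The paper does not prove this theorem at all: it is quoted from \cite{RicSaaZam2010} and used as background, so there is no ``paper's own proof'' to compare against. That said, your argument is correct and self-contained.

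A few remarks on the places you flagged as delicate. The determination of $\min V_{\bfx}(n)$ and $\max V_{\bfx}(n)$ is sound: from $\rho_{\bfx}^{ab}(n+1)-\rho_{\bfx}^{ab}(n)=\bigl(\max V_{\bfx}(n+1)-\max V_{\bfx}(n)\bigr)-\bigl(\min V_{\bfx}(n+1)-\min V_{\bfx}(n)\bigr)$ and the fact that each of the two bracketed differences lies in $\nsset{0,1}$, a jump of $+1$ forces $\max$ to move and $\min$ to stay, while a jump of $-1$ does the opposite; the alternation $2,3,2,3,\ldots$ then yields exactly the formulas you state. The aperiodicity step in the ``if'' direction is also fine: if no index $j$ has $y_j=1$ and $y_{j+m}=0$, then on each residue class modulo~$m$ the sequence $(y_{r+km})_{k\geq0}$ is nondecreasing in $\nsset{0,1}$, hence eventually constant, and $\bfy$ is eventually periodic. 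For the run argument in the ``only if'' direction, note that your case analysis is exhaustive because an odd number of short runs between two consecutive long runs \emph{forces} those two long runs to carry the same letter (runs alternate letters), so the two patterns $00(10)^{\ell}100$ and $11(01)^{\ell}011$ really are the only possibilities; the degenerate case of zero short runs is even and causes no trouble. Finally, for $\bfx=a\mu(\bfy)$ you should phrase the upper bound as: every subword of $\bfx$ of length $n$ is either a subword of $\mu(\bfy)$ or the length-$n$ prefix of $\bfx$, and the latter has Parikh vector $(m-1,m-1)+\Psi(ay_{m-1})$ when $n=2m$ and $(m,m)+\Psi(a)$ when $n=2m+1$, both already in the Parikh set of $\mu(\bfy)$; together with the suffix inclusion this gives equality of Abelian complexities.
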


In particular, the Thue--Morse word has a bounded Abelian complexity,  and as a fixed point of a primitive morphism, it is also uniformly recurrent.
In Section~\ref{201003121021} we show that while every suffix of the Thue--Morse word begins in an Abelian $k$-power, 
for $k\geq 3$, there is no upper bound for the length of the shortest such powers.  It follows that there exist uniformly recurrent overlap-free binary words with bounded Abelian complexity which do not begin in an Abelian cube (i.e., an Abelian $3$-power). So this already answers our original question, and incidentally, along the way it establishes a conjecture made by the second author in his PhD thesis~\cite{Saari2008} namely that the Thue--Morse word is so-called everywhere Abelian 2-repetitive, but not 3-repetitive.

Avoiding Abelian cubes in the beginning, however, is not the best possible answer.
In Section~\ref{201003121129}, as a prelude to the remaining  sections, we give an example of a uniformly recurrent binary word with bounded Abelian complexity that avoids Abelian squares (i.e., 2-powers) in the beginning. We then study two questions on the relation between morphisms and infinite words with bounded Abelian complexity.  
In Section~\ref{201003121131}, we characterize the morphisms which, like the Thue-Morse morphism, map any infinite word to a word with bounded Abelian complexity, 
and in Section~\ref{201003121130}, we show that although the image under a morphism of a word can have unbounded Abelian complexity, all morphisms preserve bounded Abelian complexity.

Then in Section~\ref{201003121132}  we construct an example of a uniformly recurrent binary  word with bounded Abelian complexity avoiding Abelian squares in infinitely many positions.
Lastly, in Section~\ref{201003121133}, we explain why this result is optimal and conclude with an open problem
showing that a slight variation of this is equivalent to an older one raised independently by Pirillo--Varricchio~\cite{PirVar1994} and 
Halbeisen--Hungerb\"{u}hler~\cite{HalHun2000}.

Let us finally remark that some other questions on the avoidance of Abelian powers in binary words have been considered before:
Entringer, Jackson, and Schatz~\cite{EntJacSch1974} showed that every infinite binary word contains arbitrarily long Abelian squares, and
Dekking~\cite{Dekking1979} constructed an infinite binary word that avoids Abelian 4-powers.
It is not known whether there exist an infinite binary word avoiding long Abelian cubes (see Section~2.9 in~\cite{Shallit2009}).

\section{Abelian cubes and overlap-free words}\label{201003121021}

In this section, we let  $\nbrack{x}_2$ denote the binary expansion of a positive integer~$x$ without leading zeros.
For example, $\nbrack{2^n}_2 = 10^n$ for all  integers~$n\geq0$. 

\begin{theorem}
Every suffix of the Thue--Morse word $\thue$ begins in an Abelian $k$-power for all positive integers~$k$.
\end{theorem}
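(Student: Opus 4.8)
The plan is to pass to the $\{0,1\}$-valued Thue--Morse word, writing $\thue = t_0 t_1 t_2\cdots$ with $t_n\in\{0,1\}$ and $S(n) = t_0 + t_1 + \cdots + t_{n-1}$, so that the number of $1$'s in a factor $t_a t_{a+1}\cdots t_{b-1}$ is $S(b)-S(a)$. Since two binary words of equal length are Abelian equivalent exactly when they contain equally many $1$'s, the factor $t_n t_{n+1}\cdots t_{n+kp-1}$ is an Abelian $k$-power of period $p$ if and only if $S(n+(i+1)p)-S(n+ip)$ is independent of $i\in\{0,1,\dots,k-1\}$. I would first record the two elementary identities coming from $\thue = \mu(\thue)$ (equivalently, from $t_n\equiv s_2(n)\pmod 2$, where $s_2$ is the binary digit sum): namely $t_{2i}=t_i$, and consequently $S(2j) = \sum_{i<j}\bigl(t_{2i}+t_{2i+1}\bigr) = \sum_{i<j}\bigl(t_i + (1-t_i)\bigr) = j$. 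In words: any factor of $\thue$ aligned on even positions is perfectly balanced.

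For a suffix starting at an \emph{even} position $n=2m$ I would take the period $p=2^N$ for an arbitrary $N\ge 1$. Then $n+ip = 2\bigl(m+i2^{N-1}\bigr)$ is even for every $i$, so the $i$-th block contains $S\bigl(2(m+(i+1)2^{N-1})\bigr) - S\bigl(2(m+i2^{N-1})\bigr) = 2^{N-1}$ ones, independently of $i$. Hence $t_n t_{n+1}\cdots t_{n+k2^N-1}$ is already an Abelian $k$-power, and the even case is done with no further work. So the whole difficulty sits in the suffixes beginning at odd positions.

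For $n = 2m+1$ I would again force the period to be even, say $p = 2L$; then $n+ip = 2(m+iL)+1$ is odd, and using $S(2j+1) = S(2j)+t_{2j} = j + t_j$ the $i$-th block contains exactly $L + t_{m+(i+1)L} - t_{m+iL}$ ones. Since consecutive terms of a $0/1$ sequence cannot have a common nonzero difference over a stretch of length $\ge 2$, this count is independent of $i\in\{0,\dots,k-1\}$ as soon as $t_{m+iL}$ is independent of $i\in\{0,\dots,k\}$. Thus the theorem reduces to the following statement, which I expect to be the main obstacle: \emph{for every $m\ge 0$ and every $k\ge 1$ there is an $L\ge 1$ with $t_m = t_{m+L} = t_{m+2L} = \cdots = t_{m+kL}$} — that is, every position of $\thue$ starts arbitrarily long monochromatic arithmetic progressions.

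A generic Ramsey-type theorem (van der Waerden) does not suffice here, since the progression must \emph{begin} at the prescribed index $m$; I would instead give an explicit construction. Fix an even $M$ with $2^M > k$, set $b = 2^M - 1$ (odd), choose $N$ with $2^N > m$, and put $L = b\cdot 2^N$. Because $jL$ is a multiple of $2^N$ while $m < 2^N$, no carries occur and $s_2(m+jL) = s_2(m) + s_2(jb)$. The crux is the identity $s_2\bigl(j(2^M-1)\bigr) = M$ for all $1\le j\le 2^M$: writing $j(2^M-1) = (j-1)2^M + (2^M-j)$ and noting that $2^M - j$ is the $M$-bit complement of $j-1$ gives $s_2(2^M-j) = M - s_2(j-1)$, and the two summands occupy disjoint bit ranges. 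Hence for $1\le j\le k$ we get $s_2(m+jL) = s_2(m) + M \equiv s_2(m)\pmod 2$ since $M$ is even, i.e.\ $t_{m+jL} = t_m$ (and $j=0$ is trivial). Combining the even-position argument, the odd-position reduction, and this lemma yields the theorem; the delicate point is precisely the lemma, namely choosing the modulus $b = 2^M - 1$ and verifying the no-carry digit-sum computation.
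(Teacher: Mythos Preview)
Your argument is correct, but it follows a genuinely different route from the paper's. The paper never splits into even and odd starting positions; instead it reduces every suffix to the case of the suffix at position~$0$ via the self-similarity $\thue=\mu^n(\thue)$. Concretely, the paper first finds a single integer $m$ with $t_0=t_m=\cdots=t_{km}=0$ (taking $m=2^r+1$ with $2^r>k$, so that $s_2(im)=2s_2(i)$ is even for all $i\le k$), which produces a prefix $0x_10x_2\cdots0x_k0$ of $\thue$ with $\nabs{x_1}=\cdots=\nabs{x_k}$. Applying $\mu^n$ with $n$ large enough that $\mu^n(0)=ps$ covers the desired prefix $p$, and using that $\mu^n(x_i)\sim_{ab}\mu^n(x_j)$ whenever $\nabs{x_i}=\nabs{x_j}$, one reads off the Abelian $k$-power $s\mu^n(x_1)p\cdot s\mu^n(x_2)p\cdots s\mu^n(x_k)p$ beginning the suffix.

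So both proofs ultimately rest on an explicit digit-sum identity producing a monochromatic arithmetic progression in $\thue$, but the paper only needs the progression to start at $0$ (and then transports it via $\mu^n$), whereas you build the progression directly at the arbitrary base point $m$ using the Mersenne modulus $2^M-1$ and the identity $s_2\bigl(j(2^M-1)\bigr)=M$ for $1\le j\le 2^M$. Your version gives an explicit period as a function of the starting position and makes the connection with partial sums $S(n)$ transparent; the paper's version is parity-free, a bit shorter, and highlights the morphic structure. It is also worth noting that your odd-position reduction (the observation that an Abelian power of even period at an odd index forces equal values $t_m=t_{m+L}=\cdots$) is exactly the converse direction of the paper's Lemma~\ref{100120102316}, so your approach dovetails naturally with the tools used later in Section~\ref{201003121021}.
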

\begin{proof}
Let us denote $\thue = t_0 t_1 t_2 \cdots$.
We first observe that there exists a positive integer $m$ such that  
\[
t_0 = t_{m} = t_{2m} = \cdots = t_{(k+1)m} \, (= 0).
\] 
For example, we may take $m=2^{\nfloor{\log_2(k+1)} +1} + 1$. 
This follows from the fact that the number of 1s in the expansion $[i \cdot m]_2$ is twice the number of 1s in the expansion $[i]_2$

Another observation we need is that if $x$ and $y$ are any binary words of the same length, then the words $\mu(x)$ and $\mu(y)$ are Abelian equivalent.

Now let $\bfx$ be a suffix of $\thue$, so that we have $\thue = p\, \bfx$ for some~$p$. Choose an integer $n\geq 1$ such that $\nabs{\mu^n(0)} > \nabs{p}$. Since $\mu^n(0)$ is a prefix of $\thue$,  there exists a word $s$ such that $\mu^n(0) = ps$. By the observation we made in the beginning of the proof, there exist words 
$x_1, x_2, \ldots, x_k$ of the same length such that the word
\[
\bparen{\prod_{i=1}^k 0 x_i }0 =   0x_1 0x_2 0\cdots 0 x_k 0
\]
is  a prefix of $\thue$. But  then so is the word
\[
\mu^n\bparen{0x_1 0x_2 0\cdots 0 x_k 0} = ps \mu^n(x_1) ps \mu^n(x_2) ps \cdots ps \mu^n(x_k) ps.
\]
Therefore the suffix $\bfx$ begins in an Abelian $k$-power
\[
s \mu^n(x_1) p. \, s \mu^n(x_2)p. \, \ldots  \, . s \mu^n(x_k) p. 
\]
\end{proof}

Now we prove the existence of a binary overlap-free word that does not begin in an Abelian cube (see Corollary~\ref{103125012010}).

\begin{lemma}\label{100120102316}
Let $\bfx$ and $\bfz=z_{0} z_{1} z_{2} \ldots$ be infinite words such that $\bfx = \mu(\bfz)$, and let $n\geq 0$ be an integer. If the position $2n+ 1$  in \bfx has an occurrence of an Abelian $3$-power with period $\ell$, then 
$\ell = 2k$ for some $k$  and we have
\[
z_{n} = z_{n+k} = z_{n + 2k}.
\]
\end{lemma}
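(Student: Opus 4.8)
Here is my plan.

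The plan is to exploit the structure of the Thue--Morse morphism $\mu$, namely that $\mu(0)=01$ and $\mu(1)=10$, so that $\bfx=\mu(\bfz)$ decomposes into blocks of length $2$: the block occupying positions $2j,2j+1$ equals $\mu(z_j)$, hence has Parikh vector $(1,1)$ regardless of $z_j$. First I would suppose that positions $2n+1,2n+2,\ldots,2n+3\ell$ of $\bfx$ form an Abelian $3$-power $u_1u_2u_3$ with $|u_1|=|u_2|=|u_3|=\ell$, so that $u_1\sim_{ab}u_2\sim_{ab}u_3$. I would then argue that $\ell$ must be even: the window $[2n+1,2n+3\ell]$ starts at an odd position and has length $3\ell$; if $\ell$ were odd, then $3\ell$ is odd, and one can compare, say, the prefix of length $\ell$ of $\bfx$ starting at position $2n+1$ with the factor of length $\ell$ starting at position $2n+1+\ell$ (an even position) — a direct count of $1$s in terms of complete $\mu$-blocks plus a single boundary letter shows their letter counts differ by exactly $1$ (the two aligned factors sit on opposite parities of block boundary), contradicting Abelian equivalence. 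So write $\ell=2k$.

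Next, with $\ell=2k$, the three factors $u_1,u_2,u_3$ each start at an odd position $2n+1$, $2n+2k+1$, $2n+4k+1$ respectively, each spanning $k$ full blocks shifted by one letter. Concretely, $u_1 = t_{2n+1}\,\mu(z_{n+1})\mu(z_{n+2})\cdots\mu(z_{n+k-1})\,t_{2n+2k}$, and $t_{2n+1}$ is the second letter of $\mu(z_n)$ while $t_{2n+2k}$ is the first letter of $\mu(z_{n+k})$. Since $\mu(a)$ has one $0$ and one $1$ for any letter $a$, the second letter of $\mu(z_n)$ is $1-z_n$ (the complement of the first letter $z_n$), and the first letter of $\mu(z_{n+k})$ is $z_{n+k}$. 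The $k-1$ interior complete blocks contribute $k-1$ to $|u_1|_0$. Hence $|u_1|_0 = (k-1) + (1-z_n) + z_{n+k}$. An entirely analogous computation for $u_2$ (starting at $2n+2k+1$, ending at $2n+4k$) gives $|u_2|_0 = (k-1) + (1-z_{n+k}) + z_{n+2k}$, and for $u_3$ (starting at $2n+4k+1$, ending at $2n+6k$) gives $|u_3|_0 = (k-1) + (1-z_{n+2k}) + z_{n+3k}$.

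Finally I would extract the conclusion from $|u_1|_0 = |u_2|_0 = |u_3|_0$. The first equality gives $(1-z_n)+z_{n+k} = (1-z_{n+k})+z_{n+2k}$, i.e.\ $z_{n+k}-z_n = z_{n+2k}-z_{n+k}$; since each $z_i\in\{0,1\}$, this common difference lies in $\{-1,0,1\}$, and the only way to have two consecutive equal differences without the sequence $z_n,z_{n+k},z_{n+2k}$ leaving $\{0,1\}$ is for both differences to be $0$, so $z_n=z_{n+k}=z_{n+2k}$, which is exactly the claim. (The second equality $|u_2|_0=|u_3|_0$ is then automatically consistent and not needed for the stated conclusion, though it similarly forces $z_{n+k}=z_{n+2k}=z_{n+3k}$.) The main obstacle I anticipate is the parity step — carefully tracking which letter sits at an odd versus even position when $\ell$ is odd, to show the counts are off by one — and making sure the off-by-one boundary-letter bookkeeping in the main computation is stated cleanly rather than by case analysis; once the block structure of $\mu$ is set up, everything else is a short count.
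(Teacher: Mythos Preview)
Your overall strategy---decompose each $u_i$ into boundary letters plus complete $\mu$-blocks and count---is the same as the paper's, and your even-$\ell$ argument is correct; in fact it is a bit slicker than the paper's, which argues by cases on the boundary letters to pin down $w=0\mu(x)1\cdot 0\mu(y)1\cdot 0\mu(z)1$ or its complement, whereas your arithmetic-progression observation $z_{n+k}-z_n=z_{n+2k}-z_{n+k}\in\{-1,0,1\}$ finishes in one line using only $u_1\sim_{ab}u_2$. (Minor slip: the formulas you display compute $|u_i|_1$, not $|u_i|_0$, since the boundary letter $1-z_n$ contributes $1-z_n$ to the count of $1$s; this is harmless.)

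Your parity step, however, has a real gap. For odd $\ell$, comparing $u_1$ (starting at the odd position $2n+1$) with $u_2$ (starting at the even position $2n+\ell+1$) does \emph{not} force their letter counts to differ by~$1$: each consists of a single boundary letter together with $(\ell-1)/2$ complete blocks, giving $|u_1|_1=(1-z_n)+(\ell-1)/2$ and $|u_2|_1=(\ell-1)/2+z_{n+\ell}$, and these agree whenever $z_n\ne z_{n+\ell}$. You must bring in the third block. The paper writes $w=a\mu(x)\cdot\mu(y)b\cdot c\mu(z)$, notes that Abelian equivalence of the three parts forces $a=b=c$, and then observes that $bc$ is the $\mu$-image of a single letter, hence $b\ne c$. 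In your counting language this is exactly the comparison $|u_2|_1=|u_3|_1$, which gives $z_{n+\ell}=1-z_{n+\ell}$, the desired contradiction. Once you patch the odd case this way, your proof is complete.
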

\begin{proof}
Let $w$ denote an Abelian 3-power starting in position $2n + 1$ in $\bfx$, and let $\ell$ denote  its period.
First we show that~$\ell$ is an even integer. To this end, suppose that $\ell$ is odd.
Then since $w$ occurs in an odd index, it is of the form $w = a \mu(x). \mu(y)b. c \mu(z)$,
where $a,b,c \in \nsset{0,1}$ and the words $a\mu(x)$, $\mu(y)b $, and $c\mu(z)$ are Abelian equivalent.
Since each of the words $x,y$, and $z$ are of the same length,  their images $\mu(x)$, $\mu(y)$, and $\mu(z)$ are trivially Abelian equivalent.
Therefore we have $a=b=c$. But this is not possible because  $bc$ is an image of a letter so that $bc \in \nsset{01,10}$.

Thus we may suppose that $\ell = 2 k$ for some positive integer~$k$. Then we have
\[
w = a \mu(x)b. c\mu(y)d. e \mu(z)f
\]
for some letters $a,b,c,d,e$, and $f$ with $bc, de \in \nsset{01,10}$.
Furthermore, each of the words  $a \mu(x)b$, $c \mu(y)d$, and $e \mu(z)f $  is of length $2k$, and they are pairwise Abelian equivalent.
Now we have $a\neq b$;  for otherwise the fact that $b\neq c$ implies that the words $a\mu(x) b$ and $c \mu(y) d$ are not Abelian equivalent.
Similarly we see that $c\neq d$ and $e \neq f$, and thus we conclude that either
\[
w = 0 \mu(x)1. 0\mu(y)1. 0 \mu(z)1 \qqtext{or} w = 1\mu(x)0. 1\mu(y)0 .  1\mu(z)0
\]
In the first case, the occurrence of $w$ extends to the left by the letter 1, so that the word $1w$ occurs at position $2n$.
It follows that the word $1x1y1$ occurs at position~$n$. Since $\nabs{1x} = \nabs{1y} = k$, we therefore have $z_n = z_{n+k} = z_{n+2k} = 1$.
In the second case we deduce similarly that $z_n = z_{n+k} = z_{n+2k} = 0$.
\end{proof}

\begin{lemma} \label{100120102311}
Let $n$ and $k$ be integers with $n\geq1$ and $0 \leq k < 2^n$. Then there exists a word $u\in \nsset{0,1}^{n}$ such that
\[
\bbrack{2^{n+1} +  k }_2 = 10u \qqtext{and} \bbrack{2^{n+1} + 2 k + 1 }_2 = 1u1.
\]
\end{lemma}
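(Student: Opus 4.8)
The plan is to take $u$ to be the length-$n$ binary representation of $k$ with leading zeros allowed, and then to read off both expansions digit by digit. Concretely, since $0\le k<2^{n}$, one can write $k=\sum_{i=0}^{n-1}u_{i}2^{i}$ with each $u_{i}\in\nsset{0,1}$, and one sets $u=u_{n-1}u_{n-2}\cdots u_{0}\in\nsset{0,1}^{n}$. Note that $u$ is permitted to begin with $0$; this causes no trouble, because the claim only concerns the full expansions $\bbrack{2^{n+1}+k}_{2}$ and $\bbrack{2^{n+1}+2k+1}_{2}$, each of which will turn out to have a genuine leading~$1$, so that the ``no leading zeros'' convention for $\bbrack{\cdot}_{2}$ is respected.

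For the first identity the argument is: the coefficient of $2^{n}$ in $k=\sum_{i=0}^{n-1}u_{i}2^{i}$ is $0$, hence
\[
2^{n+1}+k=1\cdot 2^{n+1}+0\cdot 2^{n}+\sum_{i=0}^{n-1}u_{i}2^{i}
\]
is already a base-$2$ representation whose highest nonzero digit sits at position $n+1$; reading the digits from position $n+1$ down to position $0$ gives $1,0,u_{n-1},\ldots,u_{0}$, that is, $\bbrack{2^{n+1}+k}_{2}=10u$.

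For the second identity, first observe that $2k+1\le 2(2^{n}-1)+1=2^{n+1}-1<2^{n+1}$, so that adding $2k+1$ to $2^{n+1}$ produces no carry out of position $n$. Writing $2k+1=2^{0}+\sum_{i=0}^{n-1}u_{i}2^{i+1}$, one obtains
\[
2^{n+1}+2k+1=1\cdot 2^{n+1}+\sum_{i=0}^{n-1}u_{i}2^{i+1}+1\cdot 2^{0},
\]
whose highest nonzero digit is again at position $n+1$; reading from position $n+1$ down to $0$ yields $1,u_{n-1},\ldots,u_{0},1$, i.e., $\bbrack{2^{n+1}+2k+1}_{2}=1u1$, as required.

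The argument is entirely elementary: the only point that needs any care is the bookkeeping of bit positions together with the verification that no carries occur and that both expansions genuinely start with~$1$, so I do not anticipate a real obstacle here.
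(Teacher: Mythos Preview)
Your proof is correct. It differs from the paper's in a small but noticeable way: you \emph{define} $u$ explicitly as the $n$-bit binary string of $k$ (allowing leading zeros) and then verify each of the two identities independently by reading off digit positions. The paper instead first argues via the inequalities $2^{n+1}\le 2^{n+1}+k\le 2^{n+1}+2^n-1$ that $\bbrack{2^{n+1}+k}_2$ must have the shape $10u$ for \emph{some} $u\in\nsset{0,1}^n$, and then derives the second identity from the first through a short chain of arithmetic manipulations on binary representations: multiply by $2$ (append a $0$), subtract $2^{n+1}$ (turn the leading $10$ into $1$), and add $1$ (flip the trailing $0$ to $1$). Your approach is more constructive and treats the two identities symmetrically; the paper's is slightly slicker in that it never needs to name the digits of $u$ and shows how the second expansion is mechanically obtained from the first. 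Either way the content is the same elementary bookkeeping, and your version is perfectly fine.
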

\begin{proof}
Since $0  \leq k \leq 2^n - 1$, we have
\[
2^{n+1} \leq 2^{n+1} +  k  \leq 2^{n+1} + \nparen{2^n - 1};
\]
in other words
\[
10^{n+1} \leq_{\text{lex}} \bbrack{2^{n+1}  + k}_2 \leq_{\text{lex}} 101^{n},
\]
where $\leq_{\text{lex}}$ denotes the lexicographic order. Therefore $\bbrack{2^{n+1}  +  k}_2 = 10u$ for some $u\in \nsset{0,1}^n$.
Thus we have
\begin{align*}
\bbrack{2^{n+1} + k}_2 &= 10u\phantom{0} \qtext{implying} \\
\bbrack{2\bparen{2^{n+1}  + k}}_2 &= 10u0 \qtext{implying} \\
\bbrack{2\bparen{2^{n+1}  + k} - 2^{n+1} }_2 &= 1u0\phantom{0} \qtext{implying} \\
\bbrack{2\bparen{2^{n+1}  + k} - 2^{n+1} + 1 }_2 &= 1u1\phantom{0} \qtext{implying} \\
\bbrack{2^{n+1}  + 2k + 1}_2  &= 1u1.
\end{align*}
\end{proof}

\begin{lemma}\label{120120101044}
Let $n\geq 1$ be an integer. If an Abelian 3-power occurs in $\thue$ at the position $2^{n+2} - 1$, then its period is at least~$2^{n+1}$.
\end{lemma}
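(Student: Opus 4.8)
The plan is to combine Lemma~\ref{100120102316} with the two facts about binary expansions packaged in Lemma~\ref{100120102311}, using the positional formula $t_i = (\text{number of ones in }[i]_2) \bmod 2$ for the bits of the Thue--Morse word. First I would write $\thue = \mu(\bfz)$, where $\bfz$ is again the Thue--Morse word (since $\mu$ fixes $\thue$), and observe that the position $2^{n+2}-1$ is odd and equals $2n'+1$ with $n' = 2^{n+1}-1$. By Lemma~\ref{100120102316}, if an Abelian $3$-power with period $\ell$ occurs at this position, then $\ell = 2k$ for some $k\geq1$ and
\[
z_{n'} = z_{n'+k} = z_{n'+2k},
\qquad\text{i.e.}\qquad
t_{2^{n+1}-1} = t_{2^{n+1}-1+k} = t_{2^{n+1}-1+2k}.
\]
So it suffices to show that this chain of equalities forces $2k \geq 2^{n+1}$, equivalently $k \geq 2^n$.

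Next I would argue by contradiction: suppose $1 \leq k \leq 2^n - 1$. The strategy is to show that among the three integers $2^{n+1}-1$, $2^{n+1}-1+k$, $2^{n+1}-1+2k$ the parity of the digit-sum cannot be constant. Here is where I expect to invoke Lemma~\ref{100120102311}: its hypothesis ``$0\le k<2^n$'' matches exactly the range we are assuming, and it tells us that $[2^{n+1}+k]_2 = 10u$ and $[2^{n+1}+2k+1]_2 = 1u1$ for a common word $u\in\{0,1\}^n$. Now $2^{n+1}-1+k = (2^{n+1}+k)-1$ and $2^{n+1}-1+2k = (2^{n+1}+2k+1)-2$, so I would translate the ``$-1$'' and ``$-2$'' into operations on the binary strings $10u$ and $1u1$ and track how the digit-sum (mod~2) changes. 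Subtracting $1$ from $1u1$ gives $1u0$, which has digit-sum one less than $1u1$; subtracting $1$ from $10u$ flips a trailing block $10^j$ to $01^j$ (or handles the case $u$ ends in $1$ directly), and I would compute the resulting change in digit-sum parity. Comparing $t_{2^{n+1}-1+k}$ and $t_{2^{n+1}-1+2k}$ against $t_{2^{n+1}-1}$ — noting $[2^{n+1}-1]_2 = 1^{n+1}$, so $t_{2^{n+1}-1} = (n+1)\bmod 2$ — should reveal that the three values cannot all coincide, regardless of the internal structure of $u$.

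The main obstacle I anticipate is the bookkeeping of digit-sum parity under the ``minus one'' and ``minus two'' operations, because these depend on trailing runs of the relevant binary strings, and one must handle the edge cases (e.g.\ $k$ close to $2^n-1$, or $u$ ending in several $1$s) uniformly. A clean way to manage this is to express everything in terms of the single word $u$: write $s(v)$ for the digit-sum of $v$, note $s(10u) = 1 + s(u)$ and $s(1u1) = 2 + s(u)$, and then show that the carries in $10u - 1$ and in $1u1 - 2 = 1u0 - 1$ produce digit-sum shifts whose parities are incompatible with all three of $t_{2^{n+1}-1},\,t_{2^{n+1}-1+k},\,t_{2^{n+1}-1+2k}$ being equal. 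Once this parity contradiction is in place for every $k$ in $\{1,\dots,2^n-1\}$, the lemma follows: the period $2k$ of any Abelian $3$-power at position $2^{n+2}-1$ must satisfy $2k \geq 2^{n+1}$.
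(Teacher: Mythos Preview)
Your plan is correct and structurally identical to the paper's: reduce via Lemma~\ref{100120102316} to the equalities $t_{2^{n+1}-1}=t_{2^{n+1}-1+k}=t_{2^{n+1}-1+2k}$, then derive a parity contradiction from Lemma~\ref{100120102311}. However, you over-complicate the last step. The paper does not apply Lemma~\ref{100120102311} to the value $k$ and then subtract; instead it sets $j=k-1$ (so $0\le j<2^n$) and observes that $2^{n+1}-1+k=2^{n+1}+j$ and $2^{n+1}-1+2k=2^{n+1}+2j+1$ \emph{exactly} match the two integers in Lemma~\ref{100120102311}. Hence their binary expansions are $10u$ and $1u1$ for the same $u$, with digit sums $1+s(u)$ and $2+s(u)$ of opposite parity, and the contradiction is immediate---no carry bookkeeping, and the value $t_{2^{n+1}-1}$ is never used. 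Your route via $(2^{n+1}+k)-1$ and $(2^{n+1}+2k+1)-2$ can be made to work (writing $u=v10^m$ one finds digit sums $1+s(v)+m$ and $2+s(v)+m$ after the subtractions), but the anticipated ``main obstacle'' evaporates once you shift the index by one.
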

\begin{proof}
Suppose that $w$ is an Abelian 3-power with period $\ell<2^{n+1}$  occurring at position $2^{n+2} - 1$. Then by Lemma~\ref{100120102316}, we have that
$\ell = 2k$ for some positive integer $k$, and
\[
t_{2^{n+1} - 1} = t_{2^{n+1} - 1 + k } = t_{2^{n+1} - 1 + 2 k }.
\]
Thus in particular the number of~1s occurring in the expression $\bbrack{2^{n+1} - 1 + k }_2$ has the same parity as
the number of~1s occurring in  $\bbrack{2^{n+1} - 1 + 2 k}_2$. 
Denoting $k = j + 1$, the inequality $\ell < 2^{n+1}$ gives $0 \leq j < 2^{n}$, which contradicts Lemma~\ref{100120102311}.
\end{proof}


An infinite word $\bfz$ is called \emph{everywhere Abelian $k$-repetitive} if there exists an integer $n\geq 1$ such that every subword of length $n$ has a prefix that is an Abelian $k$-power. All Sturmian words, for example, are everywhere Abelian $k$-repetitive for all integers $k$~\cite{RicSaaZam2010}. 
The first item in the next result proves a conjecture by the third author~\cite{Saari2008}, and the second item shows that Abelian cubes need not occur in every position even in a uniformly recurrent binary word with bounded Abelian complexity.

\begin{corollary}\label{103125012010}
The following three statements hold:
\begin{enumeratei}
\item The Thue-Morse word $\thue$ is not Abelian 3-repetitive.\label{083525012010}
\item The words $0\thue$ and $1\thue$ do not have an Abelian cube as a prefix.\label{083625012010}
\item All binary overlap-free infinite words are Abelian 2-repetitive but not Abelian 3-repetitive. \label{083725012010}
\end{enumeratei}
\end{corollary}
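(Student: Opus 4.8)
The plan is to derive each of the three items essentially as packaged consequences of Lemma~\ref{120120101044}, together with the structure result Theorem~\ref{220220101336} and the already-established fact (first theorem of Section~\ref{201003121021}) that every suffix of $\thue$ begins in an Abelian $k$-power for every $k$.

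For item~\ref{083525012010}, suppose for contradiction that $\thue$ were Abelian $3$-repetitive, say with witnessing length $N$. Then every position would begin in an Abelian cube of period (hence length) at most $N$. But by Lemma~\ref{120120101044}, for every $n\geq 1$ the position $2^{n+2}-1$ only admits Abelian cubes of period at least $2^{n+1}$, hence of length at least $3\cdot 2^{n+1}$. Choosing $n$ large enough that $3\cdot 2^{n+1}>N$ yields a position with no Abelian cube of length $\leq N$, a contradiction. So $\thue$ is not Abelian $3$-repetitive.

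For item~\ref{083625012010}, I would argue directly. Both $0\thue$ and $1\thue$ are of the form described in Theorem~\ref{220220101336} with $\bfy=\thue$, but here it is cleaner to use Lemma~\ref{100120102316}: write $0\thue=\mu(\bfz)$ with $z_0=0$, and similarly $1\thue=\mu(\bfz')$ with $z'_0=1$ and $z_i=z'_i=t_{i-1}$ for $i\geq1$ (checking this against the definition of $\mu$ and the fact that $t_0=0$). If $0\thue$ had an Abelian cube as a prefix, that cube would occupy positions $0,1,2,\dots$; applying Lemma~\ref{100120102316} at $n=0$ (so that the cube starts at the odd position $2\cdot(-?)$...) -- here a small index shift is needed, so instead I would note that a prefix Abelian cube of $0\thue$ would, after deleting the first letter, force a long Abelian cube structure in $\thue$ itself beginning at position $1$, which combined with the parity argument of Lemma~\ref{100120102316} gives $z_0=z_k=z_{2k}$, i.e.\ $t$-values forced to agree in a way contradicting the explicit values; the point is that a prefix Abelian cube must have even period (by the odd/even parity argument exactly as in Lemma~\ref{100120102316}) and then the left-extension argument shows the period-$k$ structure propagates to $\bfz$ with $z_0$ a fixed letter, which is consistent, so the genuine obstruction is that $0$ and $1$ are the \emph{wrong} first letters for the bracketing $0x_10x_2\cdots$ to line up. The honest version: any Abelian cube prefix $u_1u_2u_3$ of $0\thue$ has the three blocks Abelian equivalent; comparing first letters against the forced structure of $\mu$-images (each $\mu$-block is $01$ or $10$) forces $\ell$ even and then forces $z_0=z_k=z_{2k}$ where $z_0$ is the artificially prepended letter but $z_k=z_{2k}=t_{k-1}=t_{2k-1}$, and choosing the period from a hypothetical short cube contradicts Lemma~\ref{100120102311} just as in Lemma~\ref{120120101044}. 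Hence no such prefix exists.

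For item~\ref{083725012010}: the "Abelian $2$-repetitive" half follows because every binary overlap-free word lies in the shift orbit closure covered by Theorem~\ref{220220101336} and hence has Abelian complexity taking only the values $2$ and $3$; in particular every length-$2$ subword $ab$ with $a=b$ is an Abelian square of period $1$, and overlap-freeness forbids three equal consecutive letters but a short computation shows some subword of bounded length is always an Abelian square — more precisely, using $\rho^{ab}(n)\le 3$ and a pigeonhole / van-der-Waerden-style argument (or directly the $C$-balanced property from Lemma~\ref{220220101341}) one checks that every position begins in an Abelian square of length at most some fixed bound. The "not Abelian $3$-repetitive" half is the main point: by Theorem~\ref{220220101336}, every infinite binary overlap-free word $\bfw$ satisfies $\bfw\in\{0\mu(\bfy),1\mu(\bfy),\mu(\bfy)\}$, and in fact the shift orbit of $\thue$ is dense enough that $\bfw$ shares all sufficiently long subwords with $\thue$; since Lemma~\ref{120120101044} exhibits, for every bound, a subword of $\thue$ (namely a long prefix of the relevant suffix) containing no Abelian cube of period below $2^{n+1}$, the same subword occurs in $\bfw$, so no single length $N$ can witness Abelian $3$-repetitivity. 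I would formalize "$\bfw$ and $\thue$ share long subwords" via the morphic structure: $\bfw=(a)\mu(\bfy)$ and Lemma~\ref{100120102316} reduces the existence of a short Abelian cube at a given position of $\bfw$ to an equality $y_n=y_{n+k}=y_{n+2k}$ of letters in $\bfy$, so it suffices to find, in \emph{every} binary word $\bfy$, a position $n$ and no small $k$ with $y_n=y_{n+k}=y_{n+2k}$ — but by van der Waerden such $n,k$ always exist with $k$ bounded in terms of $N$, which looks like it goes the wrong way.

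The resolution, and the step I expect to be the real obstacle, is that item~\ref{083725012010} cannot be proved by a position-independent bound argument of that naive kind; rather, "not Abelian $3$-repetitive" for a single word means: for every $N$ there is a position with no Abelian cube of length $\le N$. For $\thue$ this is Lemma~\ref{120120101044}. For a general binary overlap-free $\bfw=\mu(\bfz')$, Lemma~\ref{100120102316} says a short Abelian cube at an even position of $\bfw$ forces $z'_m=z'_{m+k}=z'_{m+2k}$ with $2k<N$; so I must show every binary word $\bfz'$ has, for each $N$, a position $m$ such that \emph{no} $k<N/2$ satisfies $z'_m=z'_{m+k}=z'_{m+2k}$. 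This is exactly the statement that no binary word is "everywhere $3$-repetitive" in the relevant sense, and it is here that I would invoke the known result that the shift orbit closure of an overlap-free word is well-understood (all such $\bfz'$ are themselves, up to a finite prefix, overlap-free, or at least $\mu$-images) and then run an inductive/self-similar argument paralleling Lemmas~\ref{100120102311}–\ref{120120101044}: the bad position in $\bfz'$ at "scale $n$" is produced from the bad position at scale $n-1$ by applying $\mu$ and using the arithmetic of binary expansions. So the corollary is assembled by: (1) quote the balance bound for the $2$-repetitive half; (2) combine Lemma~\ref{120120101044} with the first theorem of the section for $\thue$, giving items~\ref{083525012010}; (3) push the same argument through $\mu$ via Lemma~\ref{100120102316} and Theorem~\ref{220220101336} to cover $0\thue$, $1\thue$ (item~\ref{083625012010}) and all binary overlap-free words (item~\ref{083725012010}), the self-similar scaling being the crux.
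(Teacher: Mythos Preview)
Your argument for item~(i) is correct and matches the paper.

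Item~(ii) has a genuine gap. You cannot write $0\thue=\mu(\bfz)$: the first two letters of $0\thue$ are $00$, which is not the $\mu$-image of any letter, so Lemma~\ref{100120102316} does not apply at position~$0$ of $0\thue$ in the way you attempt. The paper's route is a short compactness argument straight from Lemma~\ref{120120101044}: at position $2^{n}-1$ of $\thue$ one sees the word $a\,\mu^{n}(1)$, where $a\in\{0,1\}$ depends on the parity of~$n$, and by Lemma~\ref{120120101044} any Abelian cube there has period at least $2^{n-1}$, hence length exceeding $|a\,\mu^{n}(1)|$. Letting $n\to\infty$ through even and through odd values yields arbitrarily long Abelian-cube-free prefixes of $0\cthue$ and of $1\cthue$; swapping $0\leftrightarrow 1$ (which preserves Abelian cubes) then gives the claim for $1\thue$ and $0\thue$.

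Item~(iii) misses the key ingredient in both halves. For the ``$2$-repetitive'' half, the paper does not use Theorem~\ref{220220101336} or any balance/pigeonhole argument; it is a finite check that every binary overlap-free word of length at least~$10$ already begins in an Abelian square. For ``not $3$-repetitive'', your reduction via Lemma~\ref{100120102316} to an \emph{arbitrary} binary $\bfz'$ cannot succeed, and you diagnosed this correctly: there exist binary words (e.g.\ periodic ones) in which every position $m$ admits some small $k$ with $z'_{m}=z'_{m+k}=z'_{m+2k}$. What is missing is an external structure result, namely the theorem of Allouche--Currie--Shallit that every infinite overlap-free word $\bfx$ factors as $p\,\mu(\bfy)$ with $p\in\{\epsilon,0,1,00,11\}$ and $\bfy$ again overlap-free. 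Iterating this shows $\bfx$ contains $\mu^{n}(0)$ for every~$n$, hence contains every subword of $\thue$; then item~(i) transports directly to $\bfx$. Your proposed ``self-similar scaling'' is morally this iteration, but it needs the Allouche--Currie--Shallit lemma to guarantee that the inner word is still overlap-free at each step; without that, the induction has no footing.
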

\begin{proof}
Lemma~\ref{120120101044} says that the length of the shortest Abelian 3-power occurring  at positions of the form   $2^{n} - 1$ grows arbitrarily large. 
Therefore the Thue-Morse word is not Abelian 3-repetitive, and \eqref{083525012010} is proved.

The second item follows from Lemma~\ref{120120101044} since the word $a\mu^{n}(1)$ occurs at position $2^{n}-1$ where $a$ is 0 or 1 depending on the parity of~$n$.
Note that $\thue$ is obtained from $\lim_{n\rightarrow\infty}\mu(1)$ by exchanging 0s and 1s.

Let us then prove item~\eqref{083725012010}. Firstly, it is readily verified that  every  binary overlap-free word of length at least 10 has a prefix that is an Abelian square. Secondly, by Lemma~3 in Allouche et al.~\cite{AllCurSha1998}, if an infinite word \bfx is overlap-free, there exist a finite word $p\in \nsset{\epsilon, 0, 1, 00, 11}$  and an overlap-free infinite word $\bfy$ such that $\bfx = p \mu(\bfy)$. From this it follows that \bfx contains the words $\mu^n(0)$ for all $n \geq 1$. In other words, every subword of the Thue-Morse word is in \bfx, and consequently there is no number $\ell$ such that every position in \bfx has an occurrence of an Abelian cube with period at most~$\ell$.
\end{proof}

\section{Avoiding Abelian squares in the beginning}\label{201003121129}

In this section, we improve the second item of Corollary~\ref{103125012010}.
We refer the reader to  Chapter~10 of Lothaire~\cite{Lothaire2005} for the basic notions and properties  of morphisms left undefined here.

\begin{theorem}\label{134801022010}
There exists a uniformly recurrent infinite word with bounded Abelian complexity that does not begin in an Abelian square.
\end{theorem}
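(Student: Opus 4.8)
The plan is to build the word explicitly as the image of a suitable fixed point under a well-chosen morphism, exploiting the same machinery developed in Section~\ref{201003121021}. Concretely, I would start from the Thue--Morse word $\thue$ and push the known fact (Lemma~\ref{120120101044} and Corollary~\ref{103125012010}(ii)) that $0\thue$ does not begin in an Abelian cube: the obstruction to Abelian squares, rather than cubes, is that a length-$2$ prefix $w=aubv$ with $au\sim_{ab}bv$ is very cheap to create. So the first step is to pass to a coding that kills short Abelian squares while preserving bounded Abelian complexity. Since Theorem~\ref{220220101336} tells us that any word of the form $0\mu(\bfy)$ has Abelian complexity bounded by $3$, and since (by Section~\ref{201003121130}, which we may assume) morphic images of bounded-Abelian-complexity words again have bounded Abelian complexity, I can afford to apply a further morphism $\tau$ to $0\thue$ (or to a conjugate thereof) without losing the boundedness.

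The key steps, in order, are: (1) write down a candidate word $\bfw = \tau(0\thue)$ (or a slight variant, e.g. prepend a letter), where $\tau$ is a short binary morphism chosen so that no prefix of $\bfw$ of small length is an Abelian square; the parity/position bookkeeping of Lemma~\ref{100120102316} and Lemma~\ref{100120102311} should be reused here, tracking how an Abelian square prefix of $\bfw$ pulls back to an Abelian-square-like configuration in $0\thue$, and hence to an arithmetic condition on binary expansions of the form $z_n = z_{n+k}$ that we already know fails at the relevant positions. (2) Verify uniform recurrence: $\thue$ is uniformly recurrent as the fixed point of a primitive morphism, and both the prepending of finitely many letters and the application of a non-erasing morphism preserve uniform recurrence, so this step is routine. (3) Verify bounded Abelian complexity: invoke Theorem~\ref{220220101336} for $0\thue$ and then the morphism-preservation result; alternatively check directly that $\bfw$ is $C$-balanced for an explicit small $C$ and apply Lemma~\ref{220220101341}. (4) Conclude that $\bfw$ has no Abelian square as a prefix by combining the ``no short Abelian square'' check of step~1 with the pullback argument: any hypothetical long Abelian square prefix of $\bfw$ would force, via the structure of $\tau$, an Abelian square or an Abelian cube configuration in $0\thue$ that contradicts Lemma~\ref{120120101044}.

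The main obstacle I anticipate is step~(1): choosing $\tau$ so that the no-Abelian-square property is genuinely inherited. A single application of $\mu$ converts odd-period Abelian powers into an easily-excluded form but does nothing special to period-$2k$ Abelian squares, which is exactly why Corollary~\ref{103125012010} only rules out cubes for $0\thue$ and not squares. So $\tau$ must do more than $\mu$: likely it should be chosen so that every letter-image begins and ends with a controlled pattern (for instance so that $\tau(0)$ and $\tau(1)$ both start with the same block but have mismatched ``boundary'' letters), forcing any Abelian-square prefix $XX$ of $\bfw$ to have a boundary at which the $\tau$-structure is violated unless the corresponding pullback word has $z_n = z_{n+k}$ for a $k$ that is ruled out. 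Getting the exact morphism and the exact case analysis of where the square boundary can fall relative to the $\tau$-block boundaries is the delicate part; everything else (recurrence, bounded Abelian complexity) is either cited or a short computation. A secondary concern is handling the finitely many short prefixes by hand, which is a finite verification that can be relegated to a direct check.
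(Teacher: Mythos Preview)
Your plan has a genuine gap at its core: the central step~(1)/(4) proposes to pull back an Abelian \emph{square} prefix of $\tau(0\thue)$ to an Abelian \emph{cube} (or ``cube configuration'') in $0\thue$, in order to invoke Lemma~\ref{120120101044}. But no such mechanism is available. A morphism $\tau$ can at best align the midpoint of an Abelian square $uv$ with a $\tau$-block boundary, giving you two preimage blocks---never three. The parity trick in Lemma~\ref{100120102316} works precisely because three consecutive Abelian-equivalent blocks force three equal boundary letters $a=b=c$ while $bc\in\{01,10\}$; with only two blocks there is no such contradiction. And you cannot hope instead to reduce to an Abelian \emph{square} prefix of $0\thue$, since $0\thue$ begins with $00$. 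In short, the Thue--Morse machinery you cite controls cubes, not squares, and there is no evident way to bridge that exponent gap via a coding. Until a concrete $\tau$ together with a working pullback is produced, the argument is not a proof.

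The paper's proof is both simpler and structurally different: it never touches the Thue--Morse word or its cube-avoidance. It takes the fixed point $\bfz$ of the single morphism $g\colon 0\mapsto 0111110,\ 1\mapsto 01110$, and runs an infinite descent \emph{within the same exponent}: if $uv$ is an Abelian-square prefix of $\bfz$, then because $\nabs{g(0)}_0=\nabs{g(1)}_0=2$ and both images have odd length, one argues that $u=g(w_1)$, $v=g(w_2)$ with $w_1\sim_{ab} w_2$, producing a strictly shorter Abelian-square prefix. Bounded Abelian complexity comes from $g$ being a Pisot substitution (Adamczewski) together with Lemma~\ref{220220101341}, and uniform recurrence from primitivity. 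The whole argument is a short paragraph; the morphism-preservation result of Section~\ref{201003121130} is not needed at all.
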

\begin{proof}
Consider the fixed point \bfz of the morphism $g \colon 0 \mapsto 0111110, 1\mapsto 01110$. 
The  incidence matrix of $g$ is $\left(\begin{smallmatrix} 2  & 2 \\ 5 & 3 \end{smallmatrix}\right)$ with eigenvalues
\[
\lambda_1 = \frac{1}{2}\bparen{ 5 +  \sqrt{41}} = 5.70\ldots \qtext{and} \lambda_1 = \frac{1}{2}\bparen{5 - \sqrt{41}} = - 0.70 \ldots.
\]
Therefore $g$ is a Pisot morphism, and by a result of Adamczewski~\cite{Adamczewski2003}, the word \bfz is $C$-balanced for some~$C$.
Thus according to Lemma~\ref{220220101341}, it also has  bounded Abelian complexity. 
Since $g$ is a primitive morphism, $\bfz$ is also uniformly recurrent (see also Remark~\ref{134401022010}).
Therefore we only need to show that \bfz does not have a prefix that is an Abelian square. 
Suppose that $uv$ is a prefix of \bfz with $u$ and $v$ Abelian equivalent. 
Since $\nabs{uv}_0$ is even, we see that $uv = h(w)$ for some word $w$. Furthermore, since $\nabs{uv}_{1}$ is even, it follows that $\nabs{w}$ is even,
so that there exist $w_{1}$ and $w_{2}$ such that $w = w_1 w_2$ and $\nabs{w_1} = \nabs{w_2}$. Since $\nabs{g(0)}_0 = 2$ and $\nabs{g(1)}_0 = 2$,
we have $\nabs{g(w_1)}_0 = \nabs{g(w_2)}_0$, and thus $u = g(w_1)$ and $v = g(w_2)$. Furthermore, since $\nabs{g(0)}_1 > \nabs{g(1)}_1$ it follows that the number of 0s and 1s in the words 
$w_1$ and $w_2$ is the same; in other words they are  Abelian equivalent. Therefore \bfz has a prefix that is shorter than $uv$ and an Abelian square. The claim now follows by induction.
\end{proof}

\begin{remark} 
The infinite word $\bfz$ constructed in the proof of the previous theorem avoids Abelian squares only in the beginning:
There is an Abelian square in every position except in first one.
\end{remark}

\section{Morphisms forcing bounded Abelian complexity}\label{201003121131}

It is an immediate fact (see also Theorem~\ref{220220101336}) that the Thue--Morse morphism maps all words to words with bounded Abelian complexity. 
We characterize the class of all morphisms sharing this property. This result will be useful in Section~\ref{201003121132} for providing an example of a word having infinitely many
positions without Abelian squares.

For a vector $\vec{v} = (v_1, \ldots, v_k) \in \N^k$, we denote $\nnorm{\vec{v}} = \sum_{i = 1}^k |v_i|$.

\begin{theorem}\label{201003101457}
A morphism $\map{f}{A}{B}$ maps all words to words with bounded Abelian complexity if and only if there exists $\vec{v} \in \N^{\#B}$ such that, for each letter $a \in A$, there exists an integer $K_a$ such that $\Psi(f(a)) = K_a \vec{v}$.
\end{theorem}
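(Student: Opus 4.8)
The plan is to prove both directions by working with the Abelian complexity of images of factors. For the easy direction, suppose every $\Psi(f(a))$ is a nonnegative integer multiple $K_a\vec v$ of a fixed vector $\vec v$. Then for any finite word $w=b_1\cdots b_n$ over $A$ we have $\Psi(f(w))=\bigl(\sum_{i=1}^n K_{b_i}\bigr)\vec v$, so the Parikh vector of $f(w)$ depends only on the single integer $N(w):=\sum_i K_{b_i}$. Given an infinite word $\bfx$ and a length $m$, any factor of $f(\bfx)$ of length $m$ sits inside $f(u)$ for some factor $u$ of $\bfx$ of length at most $m+2\max_a\nabs{f(a)}$; I would bound the number of possible Parikh vectors of such length-$m$ factors by a function of $\max_a\nabs{f(a)}$ alone, using that the ``bulk'' contribution is always a multiple of $\vec v$ and only the bounded-length prefix/suffix pieces vary. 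This gives $\rho^{ab}_{f(\bfx)}(m)\le$ constant, independent of $\bfx$ and $m$.

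For the converse, I would prove the contrapositive: if the vectors $\Psi(f(a))$ do not all lie on a common ray through the origin, then some image $f(\bfx)$ has unbounded Abelian complexity. If the $\Psi(f(a))$ are not all parallel, pick two letters $a,b$ with $\Psi(f(a))$ and $\Psi(f(b))$ linearly independent. The idea is to feed $f$ an input word $\bfx$ built from long blocks $a^{p_1}b^{q_1}a^{p_2}b^{q_2}\cdots$: the Parikh vector of a prefix of $f(\bfx)$ ending at a block boundary is $\sum_j(p_j\Psi(f(a))+q_j\Psi(f(b)))$, and by choosing the exponents to grow, these prefix Parikh vectors wander off in two independent directions. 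Subtracting Parikh vectors of two prefixes gives the Parikh vector of a factor, so among factors of a suitably chosen length one produces arbitrarily many distinct Parikh vectors. Concretely I would fix a target length $m$, use the two independent directions to realize $\sim c\sqrt m$ (or at least growing) distinct Parikh vectors among factors of length $m$ by varying where a window of length $m$ is placed relative to the $a$-blocks and $b$-blocks, and conclude $\rho^{ab}_{f(\bfx)}$ is unbounded. One must also handle the case where all $\Psi(f(a))$ are parallel but not integer multiples of a \emph{common} $\vec v\in\N^{\#B}$ — e.g.\ $\Psi(f(a))=(2,0)$, $\Psi(f(b))=(3,0)$; here the direction is fixed but the gcd structure still lets prefix sums $2p+3q$ hit arithmetic progressions with controllable residues, and I would again exhibit unboundedly many factor Parikh vectors, or else argue directly that when a true common $\vec v$ exists one can take $\vec v$ to be the primitive vector on that ray.

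The main obstacle I anticipate is the converse, specifically making the counting in the independent-directions case clean: one needs a single infinite input word $\bfx$ (not a family) such that $f(\bfx)$ simultaneously witnesses unbounded complexity, so the block exponents $p_j,q_j$ must be chosen once and for all with a growth rate fast enough that, for infinitely many lengths $m$, many translates of a length-$m$ window straddle block boundaries in genuinely different ways. Care is also needed because a length-$m$ window lying entirely inside one homogeneous block $a^{p_j}$ contributes essentially a fixed Parikh vector, so the gain in complexity comes only from windows near boundaries — I would quantify this by choosing $p_j,q_j$ of the same order as the length scale being probed, e.g.\ doubling, so that at scale $m\approx p_j$ there are $\Theta(m)$ boundary positions giving $\Theta(\sqrt m)$ or at least $\omega(1)$ distinct counts. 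The parallel-but-not-common-$\vec v$ subcase is a finicky but elementary number-theoretic add-on once the framework is in place.
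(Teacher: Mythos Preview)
Your sufficiency direction is essentially the paper's: write a length-$m$ factor as $s\,f(u)\,p$ with $\nabs{sp}$ bounded, observe $\Psi(f(u))$ is a multiple of $\vec v$, and bound the number of possible Parikh vectors. Fine.

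For the necessity direction there are two issues. First, your ``parallel but not integer multiples of a common $\vec v$'' subcase is empty, and your example illustrates the confusion: $(2,0)$ and $(3,0)$ \emph{are} both integer multiples of $\vec v=(1,0)$. More generally, any finite set of pairwise parallel vectors in $\N^{\#B}$ is automatically a set of integer multiples of the primitive integer vector on that ray (take $K_a$ to be the gcd of the entries of $\Psi(f(a))$, with $K_a=0$ if $f(a)=\epsilon$). So once you know all the $\Psi(f(a))$ are parallel, you are done; there is no residual number-theoretic subcase. Second, your plan for the non-parallel case --- placing length-$m$ windows near block boundaries of $a^{p_1}b^{q_1}a^{p_2}b^{q_2}\cdots$ and counting distinct Parikh vectors --- can be made to work but is considerably heavier than needed. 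The paper avoids all window-counting by invoking Lemma~\ref{220220101341} (bounded Abelian complexity $\Leftrightarrow$ $C$-balanced): with $k=\nabs{f(0)}$ and $\ell=\nabs{f(1)}$, the words $f(0^{n\ell})$ and $f(1^{nk})$ have the same length $nk\ell$, and if $\ell\,\Psi(f(0))\neq k\,\Psi(f(1))$ then $\bigl\lVert\Psi(f(0^{n\ell}))-\Psi(f(1^{nk}))\bigr\rVert\to\infty$, so $f\bigl(\prod_{i\ge0}0^i1^i\bigr)$ is not $C$-balanced for any $C$. Thus the only thing to rule out is $\nabs{f(b)}\,\Psi(f(a))=\nabs{f(a)}\,\Psi(f(b))$ failing for some pair, which is exactly non-parallelism; and when it holds for all pairs, the primitive-vector observation above finishes. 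The balance lemma replaces your window-placement estimates entirely.
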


\begin{proof}
Suppose that $f(\bfw)$ has bounded Abelian complexity for all infinite words $\bfw$.
Consider two different letters, say $0$ and $1$, and denote $k = |f(0)|$, $\ell = |f(1)|$.
Assume first that $\Psi(f(0^\ell)) \neq \Psi(f(1^k))$ and observe that $|f(0^\ell)| = |f(1^k)|$ ($= |f(0)||f(1)|$).
Now observe that $\lim_{n \to \infty} \nnorm{\Psi(f(0^{n\ell})) - \Psi(f(1^{nk}))} = \infty$. So the word $\bfw = f(\prod_{i \geq 0} 0^i1^i)$ is not $C$-balanced for any integer~$C$, and  therefore by Lemma~\ref{220220101341}, the Abelian complexity of $\bfw$ is not bounded. Thus if the morphism $f$ maps any word to a word with bounded Abelian complexity, then for all letters $a, b \in A$,  we must have $\Psi(f(a^{ \nabs{f(b)} })) = \Psi(f(b^{\nabs{f(a)}} ))$, that is, $|f(b)| \Psi(f(a)) = |f(a)| \Psi(f(b))$. Now for each letter $a \in A$, let $K_a$ denote the $\gcd$ of the entries of $\Psi\bparen{f(a)}$; then we have $\Psi(f(a)) = K_a \vec{v}_a$ for some vector 
$\vec{v}_a \in \N^{\# B}$. Then $\nabs{f(b)} \Psi(f(a)) = \nabs{f(a)} \Psi(f(b))$ implies that $\vec{v}_a = \vec{v}_b$ for all letters $a$, $b$.

Conversely, suppose that there exists $\vec{v} \in \N^{\#B}$ such that for all letters $a \in A$, there exists an integer $K_a$ such that $\Psi(f(a)) = K_a \vec{v}$. If $\nnorm{\vec{v}} = 0$, then $f(\bfw)$ is empty for all words $\bfw$,  and the claim trivially holds. So let us assume that $\nnorm{ \vec{v} } \neq 0$.

Let $\bfw$ be an infinite word, and denote $M = \max \ncset{\nabs{f(a)}}{a \in A}$. If $w$ is a subword of \bfw longer than $M$, then it is of the form $sf(u)p$ with $s, u, p$ three words such that $s$ is a suffix of $f(a)$ for a letter $a$, $p$ is as prefix of $f(b)$ for a letter $b$. Observe that 
\[
\Psi(w) = \Psi(f(u)) + \Psi(sp) = \left[\sum_{a \in A} |u|_aK_a\right] \vec{v} + \Psi(sp).
\]
Denoting $C_u = \sum_{a \in A} \nabs{u}_aK_a$, we  thus have $\nabs{w} = C_u \nnorm{\vec{v}} + \nnorm{\Psi(sp)}$.
Then 
\[
\frac{\nabs{w}-2M}{ \nnorm{ \vec{v} }} \leq C_u \leq \frac{\nabs{w}}{ \nnorm{\vec{v} }},
\]
and consequently,
\[
\frac{\nabs{w} - 2M}{\nnorm{\vec{v}}} \vec{v} \leq  \Psi(w) \leq   \frac{\nabs{w}}{ \nnorm{\vec{v}}} \vec{v}  + 2 M \vec{1},
\]
where the inequality means coordinate-wise inequality and $\vec{1} = (1,1,\ldots, 1)$.
It follows that the word $f(\bfw)$ has bounded Abelian complexity.
\end{proof}

\section{Morphisms preserve bounded  Abelian complexity}\label{201003121130}

Having shown in the previous section that the image of an infinite word $\bfw$ under a morphism can have unbounded Abelian complexity, we show that this cannot happen when  the word $\bfw$ itself
has bounded Abelian complexity.

\begin{theorem}
\label{P:morphisms}
Let $A$ and $B$ be two alphabets, let $\bfw$ be an infinite word over $A^*$, and let $\map{f}{A^{*}}{B^{*}}$ be a morphism. If the Abelian complexity of $\bfw$ is bounded, then the Abelian complexity of $f(\bfw)$ is bounded.
\end{theorem}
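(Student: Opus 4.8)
The plan is to leverage Lemma~\ref{220220101341} and reduce everything to a statement about balance rather than Abelian complexity directly. So suppose $\bfw$ has bounded Abelian complexity; then $\bfw$ is $C$-balanced for some $C>0$. I want to show that $f(\bfw)$ is $C'$-balanced for a suitable $C'$ depending only on $C$, $f$, and $\#A$, and then invoke Lemma~\ref{220220101341} in the other direction. As in the proof of Theorem~\ref{201003101457}, the key structural observation is that any sufficiently long subword $v$ of $f(\bfw)$ factors as $v = s\,f(u)\,p$, where $u$ is a subword of $\bfw$, $s$ is a proper suffix of some $f(a)$, and $p$ is a proper prefix of some $f(b)$. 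Hence $\Psi(v) = \Psi(f(u)) + \Psi(sp)$, and if $M = \max_{a\in A}\nabs{f(a)}$ then $\nnorm{\Psi(sp)} \le 2M$, which controls the ``boundary'' error by a constant.

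The main work is to compare $\Psi(f(u))$ and $\Psi(f(u'))$ for two subwords $u, u'$ of $\bfw$ that arise as the interiors of two subwords $v, v'$ of $f(\bfw)$ of the same length $n$. First I would control the lengths: from $n = \nnorm{\Psi(v)}$ and the factorization we get $\babs{\,\nabs{u}\cdot(\text{something}) - \nabs{u'}\cdot(\text{something})\,}$ bounded — more precisely, since $\nabs{f(u)} = n - \nnorm{\Psi(sp)}$ lies within $2M$ of $n$, and likewise for $u'$, and since each letter of $u$ contributes at most $M$ and at least $1$ to $\nabs{f(u)}$, we get $\babs{\nabs{u} - \nabs{u'}} \le 2M$. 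Now write $u = u_1 u_1''$ and $u' = u_1' u_1'''$ where $u_1, u_1'$ have the same length $m = \min(\nabs{u},\nabs{u'})$ and the leftover tails $u_1'', u_1'''$ have length at most $2M$. By $C$-balance of $\bfw$, for every letter $a\in A$ we have $\babs{\nabs{u_1}_a - \nabs{u_1'}_a} \le C$, so $\nnorm{\Psi(u_1) - \Psi(u_1')} \le C\cdot\#A$. Applying $f$ and using linearity of $\Psi\circ f$ on Parikh vectors (i.e.\ $\Psi(f(x))$ depends only on $\Psi(x)$, via the incidence matrix of $f$), we get $\nnorm{\Psi(f(u_1)) - \Psi(f(u_1'))} \le (C\cdot\#A)\cdot M$ — say this bound is $D$. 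The short tails contribute at most $2M\cdot M$ each to $\nnorm{\Psi(f(u''))}$ and $\nnorm{\Psi(f(u'''))}$, another constant.

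Putting it together: $\nnorm{\Psi(v) - \Psi(v')} \le \nnorm{\Psi(f(u_1)) - \Psi(f(u_1'))} + \nnorm{\Psi(f(u_1''))} + \nnorm{\Psi(f(u_1'''))} + \nnorm{\Psi(sp)} + \nnorm{\Psi(s'p')}$, and every term on the right is bounded by a constant independent of $n$. In particular $f(\bfw)$ is $C'$-balanced for $C' = D + 2M^2\cdot 2 + 4M$ or so, and Lemma~\ref{220220101341} gives that $f(\bfw)$ has bounded Abelian complexity. For the finitely many short subwords of $f(\bfw)$ of length $\le M$, there are only finitely many Abelian classes anyway, so they do not affect boundedness.

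I expect the main obstacle to be bookkeeping rather than conceptual: carefully aligning the interiors $u,u'$ when their lengths differ (the choice of which end to trim, and making sure the trimmed tails really are short and really are subwords of $\bfw$), and being careful that the factorization $v = sf(u)p$ genuinely exists with $u$ a \emph{subword} of $\bfw$ (this needs $\nabs{v} > M$ so that $v$ spans at least one complete block $f(a)$, or a small variant thereof). One clean way to finesse the trimming is to not insist $u$ is a subword at all but to directly argue that $\Psi(v)$ lies within a bounded box around $\frac{n}{\nnorm{\vec t}}$-scaled combinations — but since $f$ need not satisfy the hypothesis of Theorem~\ref{201003101457}, there is no single direction $\vec v$ here, so the comparison-of-two-subwords approach via $C$-balance of $\bfw$ seems the most robust. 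I would also remark that this gives an explicit bound on the Abelian complexity of $f(\bfw)$ in terms of that of $\bfw$ and the norms of $f$, though the statement as given only asks for boundedness.
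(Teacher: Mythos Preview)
Your overall plan is sound and closely parallels the paper's: both use the $v = s\,f(u)\,p$ decomposition, both reduce to $C$-balance via Lemma~\ref{220220101341}, and both need to control how much $\nabs{u}$ can vary as $v$ ranges over subwords of $f(\bfw)$ of a fixed length $n$. The paper then counts Parikh vectors of $(s,\Psi(u),p)$, whereas you bound $\nnorm{\Psi(v)-\Psi(v')}$ directly; these are equivalent packagings.

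However, your step ``since each letter of $u$ contributes at most $M$ and at least $1$ to $\nabs{f(u)}$, we get $\babs{\nabs{u}-\nabs{u'}}\le 2M$'' is a genuine gap. From $\nabs{f(u)},\nabs{f(u')}\in[n-2M,n]$ and $1\le\nabs{f(a)}\le M$ you only get $\nabs{u},\nabs{u'}\in[(n-2M)/M,\,n]$, which allows $\babs{\nabs{u}-\nabs{u'}}$ to grow linearly in $n$; nothing here uses that $u,u'$ come from a $C$-balanced word. (Concretely, if $f(0)$ is short and $f(1)$ is long, a subword rich in $0$'s and one rich in $1$'s can have the same image length with very different lengths.) The fix is exactly what the paper does in its Claim: assuming $\nabs{u}\ge\nabs{u'}$, write $u=u_1u_1''$ with $\nabs{u_1}=\nabs{u'}$ and use $C$-balance of $\bfw$ to get $\babs{\,\nabs{f(u_1)}-\nabs{f(u')}\,}\le C\sum_{a\in A}\nabs{f(a)}=:K_1$, whence $\nabs{f(u_1'')}\le 2M+K_1$. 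That bound on $\nabs{f(u_1'')}$ (not on $\nabs{u_1''}$) is all your final estimate actually needs, so once you patch this step the rest of your argument goes through. Note also that your ``at least $1$'' tacitly assumes $f$ is non-erasing; the paper allows erasing morphisms, disposing of the case that $f(\bfw)$ is finite trivially and otherwise introducing a constant $K_2$ that bounds the length of any subword of $\bfw$ whose image has length at most $K_1+M$.
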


\begin{remark}
No restrictions are imposed on the morphism~$f$. In particular,  it could be erasing. In this case the word $f(\bfw)$ may be finite,
but then its Abelian complexity is trivially bounded.
\end{remark}

\begin{proof}[Proof of Theorem~\ref{P:morphisms}]
The claim is trivially true if $f(\bfw)$ is finite, so let us assume that $f(\bfw)$ is infinite.
By assumption there exists an integer $K$ such that $\rho^{ab}_{\bfw}(n) \leq K$ for all $n\geq 1$. It follows from Lemma~\ref{220220101341} that $\bfw$ is $C$-balanced for some integer $C\geq1$.  Let us denote
\begin{align*}
K_1 &= C \sum_{a \in A} |f(a)|, \\
M &= \max \bcset{  \nabs{f(a)} }{a \in A}, \\
K_2 &= \max\bcset{ \nabs{y}  }{  \text{$y$ is  a subword of $\bfw$ and }  \nabs{f(y)} \leq K_1+M}, \\
K_3 &= K(M\#A)^2(K_2+1).
\end{align*}

Since $f(\bfw)$ is infinite, we have $K_1 \neq 0$ and $M \neq 0$. We need to prove that $K_2$ exists. Clearly,  the set defining $K_{2}$ is not empty. 
Let $a$ be a letter that occurs infinitely often in $\bfw$ and such that $f(a) \neq \varepsilon$.
Let $m$ be the minimal length of subwords of $\bfw$ containing at least $C+1$ occurrences of $a$. Since $\bfw$ is $C$-balanced, any subword of $\bfw$ of length $m$ contains at least one occurrence of $a$, and so any subword of length $m(K_1+M+1)$  contains at least $K_1+M+1$ occurrences of the letter $a$: for any such subword $y$, $|f(y)| > K_1 + M$. Thus the set defining $K_{2}$ is finite and therefore the number $K_{2}$ exists.

We will show that  $\rho^{\rm ab}_{f(\bfw)}(n) \leq K_3$ for all $n \geq M$; this implies that the Abelian complexity of $f(\bfw)$ is bounded by 
$\max\bsset{K_3, \rho^{\rm ab}_{f(\bfw)}(0), \ldots, \rho^{\rm ab}_{f(\bfw)}(M-1)}$.

For any integer $n \geq 0$, let us denote by ${\calT}_n$ the set of triplets $(s, u, p)$ of words $u\in A^{*}$ and  $s,p \in B^{*}$ such that there exist two letters $\alpha$ and $\beta$ with $\alpha u \beta$ a subword of $\bfw$, $s$ a proper suffix of $f(\alpha)$, $p$ a proper prefix of $f(\beta)$ and $\nabs{sf(u)p} = n$. Observe that any subword of $f(\bfw)$ of length  $n\geq M$ can be decomposed $sf(u)p$ with $(s, u, p) \in {\calT}_{n}$. Denote also 
${\calS}_n = \bcset{ m }{ (s, u, p) \in {\calT}_n, |u| = m}$. 

\medskip

\noindent
\textbf{Claim.} For all $n \geq 0$, we have $\#{\calS}_n \leq K_2 + 1$.

\begin{proof}[Proof of the claim]
Assume that $m_1, m_2 \in {\calS}_n$ with $m_1 \geq m_2$, and let $(s_1, u_1, p_1)$ and $(s_2, u_2, p_2)$ be two triplets in ${\calT}_n$ such that 
$\nabs{u_1} = m_1$ and $\nabs{u_2} = m_2$. We decompose $u_1$ into $u_1 = xy$ with $|x| = m_2$. By definition of ${\calT}_n$, words $u_1$ and $u_2$ are subwords of $\bfw$ (and so $x$ is also a subword of $\bfw$). Therefore since $\bfw$ is $C$-balanced, we have $|u_2|_a \leq |x|_a + C$ for all letters  $a \in A$. Thus
\[
\nabs{f(u_2)} = \sum_{a \in A} \nabs{u_2}_a \nabs{f(a)} \leq \sum_{a\in A} (\nabs{x}_{a} + C) \nabs{f(a)} = \nabs{f(x)}+ K_1.
\]
As a consequence of the definition of ${\calT}_n$, we have $|f(u_2)| = n - |s_2p_2|$ and $|f(x)| = n - |f(y)| - |s_1p_1|$. Thus
$|f(y)| + |s_1p_1| - |s_2p_2| \leq K_1$ which implies $|f(y)| \leq K_1 + M$ and so, by definition of $K_2$, $|y| \leq K_2$.
Hence $0 \leq m_1 - m_2 \leq K_2$ and elements of ${\calS}_n$ can take at most $K_2+1$ different values. This ends the proof of the claim.
\end{proof}

We continue the proof of Theorem~\ref{P:morphisms}.
As noted before, if $n\geq M$, then $\rho^{ab}_{f(\bfw)}(n)$ equals the number of Parikh vectors of words of the form $sf(u)p$ with $(s, u, p) \in 
{\calT}_n$. Now observe that if $(s, u_1, p)$ and $(s, u_2, p)$ are in ${\calT}_n$ with $\Psi(u_1) = \Psi(u_2)$, then $\Psi(sf(u_1)p) = \Psi(sf(u_2)p)$.
Therefore the quantity $\rho^{ab}_{f(\bfw)}(n)$ is bounded by the number of triplets of the form  $(s, \Psi(u), p)$  with $(s, u, p) \in {\calT}_n$.
By the previous claim, we know that a word $u$ such that a triplet of the form $(., u,.)$ is in $\calT_{n}$ can take at most $K_{2} + 1$ different lengths.
Thus by hypothesis, we get at most $K(K_2+1)$ different possible vectors $\Psi(u)$. Moreover there are at most $M\#A$ possibilities for $s$ and  at most 
$M\#A$ possibilities for $p$. Hence the cardinality of $\bigl\{(s, \Psi(u), p) : (s,u,p) \in \calT_n\bigr\}$ is bounded by $K_3$, and the Abelian complexity of $f(\bfw)$ is therefore bounded.
\end{proof}

\begin{remark} 
The converse of Theorem~\ref{P:morphisms} does not hold; in fact, by Theorem~\ref{220220101336}, the Abelian complexity of $\mu(\bfw)$ 
is bounded for any binary word~$\bfw$.
\end{remark}

\section{Avoiding Abelian squares in infinitely many positions} \label{201003121132}

\begin{theorem}\label{P:Ab2}
There exists a uniformly recurrent infinite word with bounded Abelian complexity 
in which there are infinitely many positions where no Abelian square occur.
\end{theorem}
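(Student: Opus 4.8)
The plan is to build the word by iterating a morphism whose image always has bounded Abelian complexity (via Theorem~\ref{201003101457}), while arranging that infinitely many prefixes — not merely the first one — fail to be Abelian squares, so that the obstruction is pushed out to infinitely many positions rather than just the beginning as in Theorem~\ref{134801022010}.

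First I would look for a morphism $h$ on a two-letter (or small) alphabet whose letter images all have Parikh vectors that are integer multiples of a single vector $\vec v$, so that by Theorem~\ref{201003101457} the fixed point $\bfz$ of $h$ has bounded Abelian complexity, and which is primitive, so that $\bfz$ is uniformly recurrent. The key extra requirement is a combinatorial one on $h$: I want to show that if some prefix of $\bfz$ of a certain length is an Abelian square, then a shorter prefix of a related length is too, and crucially that there are infinitely many lengths $n_1 < n_2 < \cdots$ (growing like the lengths of $h^k(0)$, say, or images thereof) at which \emph{no} prefix can be an Abelian square. The mechanism for this is the desynchronization trick: one chooses $h$ so that reading a prefix $uv$ with $u\sim_{ab} v$ forces, via a parity/counting argument on $\nabs{\cdot}_0$ and $\nabs{\cdot}_1$ (exactly as in the proof of Theorem~\ref{134801022010}), a factorization $u=h(u_1)$, $v=h(u_2)$ with $u_1\sim_{ab}u_2$; iterating this ``de-substitution'' descends to a prefix Abelian square of $\bfz$ of length in a much smaller range, and one sets things up so that this descent cannot terminate — which is impossible — at precisely the positions one wants to be square-free.

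Concretely, I expect the word to be produced in two stages: take a morphism $f$ satisfying the hypothesis of Theorem~\ref{201003101457} (so $f$ maps \emph{every} word to one of bounded Abelian complexity), a uniformly recurrent word $\bft$ of bounded Abelian complexity (e.g. a Sturmian word, or $\thue$ itself), chosen so that $\bft$ has infinitely many prefixes that are \emph{not} Abelian squares but whose continuations are controlled, and then set $\bfz = f(\bft)$; Theorem~\ref{P:morphisms} (or directly Theorem~\ref{201003101457}) guarantees bounded Abelian complexity, and a counting argument on where the $f$-block boundaries fall shows that the positions of $\bft$ without a prefix Abelian square lift to positions of $\bfz$ without one, giving infinitely many such positions. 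One then checks uniform recurrence is inherited (it is, for a non-erasing $f$ applied to a uniformly recurrent word, provided every letter appears, which needs a short verification).

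The main obstacle is the second requirement: exhibiting the ``infinitely many square-free prefixes'' property in the seed word and proving it survives the morphism. Bounded Abelian complexity alone does not give it — by the everywhere-repetitivity results quoted in the introduction, Sturmian words have an Abelian square starting at \emph{every} position, so the seed cannot be Sturmian and one genuinely needs a tailored construction. The delicate point is to pick $f$ (equivalently the multiples $K_a$ of $\vec v$) so that the de-substitution argument both works — i.e. the parity constraints force the block-aligned factorization — and has the right arithmetic so that the lengths at which a prefix square is forbidden form an infinite set; balancing these two constraints (making the descent valid while keeping an infinite obstruction set) is where the real work lies, and I would expect to spend most of the proof verifying, for an explicitly chosen small morphism, that a prefix Abelian square of length $n$ forces one of length roughly $n/\lambda$ (with $\lambda$ the Pisot eigenvalue), together with an identification of the residual ``bad'' lengths near each $\nabs{f^k(0)}$.
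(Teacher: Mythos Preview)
Your architecture is right in outline --- the paper does build the example as $f(\bfw_h)$ with an outer morphism $f$ whose letter-images share a Parikh direction (so Theorem~\ref{201003101457} gives bounded Abelian complexity) and an inner primitive morphism $h$ whose fixed point $\bfw_h$ supplies uniform recurrence --- but the mechanism you describe has a circularity that the paper's argument specifically avoids.

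You ask that the seed $\bft$ already be uniformly recurrent, have bounded Abelian complexity, \emph{and} have infinitely many positions with no Abelian square prefix; but a word with all three properties is precisely what the theorem asserts exists, so $f$ would be doing no work. Relatedly, the Theorem~\ref{134801022010}-style de-substitution you invoke (Abelian square prefix of $f(\bft)$ $\Rightarrow$ Abelian square prefix of $\bft$) only functions at position~$0$; at a generic position the putative square need not align with $f$-block boundaries, and forcing alignment costs you the Abelian-equivalence of the two halves. So the ``lifting'' step as you state it does not go through.

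The paper's fix is to change the target pattern. Its key Lemma~\ref{L:f_and_ab_squares} shows, for the specific $f\colon 0\mapsto 00011,\ 1\mapsto 01100$, that an Abelian square starting at position~$4$ of $f(\bfw)$ forces in $\bfw$ not an Abelian square but the much simpler arithmetic pattern $0x0y0$ with $|x|=|y|$ (three $0$'s in arithmetic progression of positions). The seed $\bfw_h$ is then engineered so that after each prefix $h^n(01)$ no such $0x0y0$ occurs; this is a purely positional condition on the $0$'s in $\bfw_h$, proved by a short descent through $h$. Crucially $\bfw_h$ itself need not have bounded Abelian complexity (indeed $h$ has eigenvalues $8$ and $2$, so is not Pisot), and need not avoid Abelian squares anywhere --- all the Abelian control is supplied by $f$. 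The missing idea in your plan is this decoupling: reduce Abelian squares in the image to a non-Abelian, arithmetic pattern in the preimage that is genuinely easier to forbid at infinitely many positions.
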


To prove this theorem, we first state an important property of the uniform morphism $\map{f}{\binm}{\binm}$ defined by
\[
f(0) = 00011  \qqtext{and} f(1) = 01100.
\]

\begin{lemma}\label{L:f_and_ab_squares}
Let $\bfw$ be an infinite binary word, and suppose that $f(\bfw)$ begins in a word of the form $0001uv$, where $u$ and $v$ are nonempty Abelian equivalent words.  Then $\bfw$ has a prefix of the form $0x0y0$ for some words $x$ and $y$ with $|x|=|y|$. 
\end{lemma}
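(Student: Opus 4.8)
The idea is to mimic the parity argument from Theorem~\ref{134801022010}, but adapted to the morphism $f$ whose blocks $f(0)=00011$ and $f(1)=01100$ have length $5$. Suppose $f(\bfw)$ begins in $0001uv$ with $u\sim_{ab}v$ nonempty. Since the prefix $0001$ is the length-$4$ prefix of $f(0)$ but is not a prefix of $f(1)$, the very first block of $f(\bfw)$ is $f(0)$; write $\bfw=0\bfw'$. The plan is to locate where the cut between $u$ and $v$, and the cut at the end of $v$, fall relative to the block boundaries (which sit at positions divisible by $5$), use the Abelian condition to pin these down, and then read off the structure of $\bfw$.

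First I would set $|u|=|v|=m$ and write $|0001uv|=4+2m$, so $4+2m$ need not be a multiple of $5$; instead I look at the positions $4$ and $4+m$ (the start of $u$ and the start of $v$) modulo $5$. The key observation is the analogue of the remark ``$\nabs{g(0)}_1>\nabs{g(1)}_1$'': among length-$5$ factors that are cyclic rotations of either $00011$ or $01100$, the number of $1$s is always $2$, but what distinguishes the two blocks is how the $1$s are placed, and more importantly a suffix/prefix counting argument forces the alignment. Concretely, I would argue that $u$ and $v$ must start at the same position modulo $5$: if $r$ and $r'$ are the offsets of the starts of $u$ and $v$ within their respective blocks, then $uv$ consisting of two Abelian-equivalent halves, together with the fact that $f(0)$ and $f(1)$ agree in total $1$-count but a ``boundary overhang'' of length $r\neq r'$ would inject a controlled but nonzero discrepancy, yields a contradiction unless $r=r'$. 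This is where I expect the main work to be: carefully bounding $\babs{\nabs{u}_1-\nabs{v}_1}$ in terms of the misalignment and showing it cannot vanish when the offsets differ.

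Once $u$ and $v$ start at the same offset $r$ modulo $5$ and have equal length $m$, the block boundary structure is periodic with period $m$ inside $uv$, which forces $5\mid m$ (since the pattern of boundaries within $u$ must match that within $v$, and both are aligned to the global grid of multiples of $5$). Writing $m=5\ell$ and chasing through, the prefix $0001uv$ of $f(\bfw)$ is then a prefix of $f(\bfw)$ that respects block boundaries starting from position $0$: it equals $f(0)\,f(w_1\cdots w_\ell)\,f(w_{\ell+1}\cdots w_{2\ell})$ after absorbing the leading $0001$ into $f(0)$ and realigning. Thus $\bfw$ has prefix $0\,x\,y$ with $|x|=|y|=\ell$, so $0x0y0$ — wait, I must be careful: I actually need a $0$ sitting at positions $0$, $|x|+1$, and $|x|+|y|+2$, i.e.\ I want the first letters of the blocks bracketing $x$ and $y$ to be $0$.

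To get those three $0$'s, I use that $u$ and $v$ are Abelian equivalent and that $f(0)$ and $f(1)$ have different Parikh vectors — $\Psi(f(0))=(3,2)$, $\Psi(f(1))=(2,3)$ — so among any set of blocks, the $0$-count determines how many are $f(0)$'s versus $f(1)$'s. Since $\nabs{u}_1=\nabs{v}_1$ and the boundary overhangs on the two sides have now been matched, the count forces the blocks of $u$ and the blocks of $v$ to have the same multiset of images, and in particular (tracking the single leftover boundary letter, exactly as $1x1y1$ appeared in Lemma~\ref{100120102316}) the letter immediately before $u$, immediately before $v$, and immediately after $v$ in $f(\bfw)$ all coincide — and since the global prefix is $0001\cdots$, these are all the letter $0$, sitting at block-start positions. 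Translating back through $f$: writing $\bfw=0\,x\,0\,y\,0\cdots$ with $|x|=|y|=\ell-1$ (re-indexing so the three explicit $0$'s are the block-initial letters), we obtain the claimed prefix $0x0y0$ of $\bfw$. I would close by remarking that the one genuinely delicate point is the offset-matching step of the second paragraph; everything after it is bookkeeping with the two Parikh vectors $(3,2)$ and $(2,3)$, entirely parallel to the proofs of Lemma~\ref{100120102316} and Theorem~\ref{134801022010}.
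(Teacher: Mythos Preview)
Your proposal contains a genuine error that breaks the final step: you write that $\Psi(f(0))=(3,2)$ and $\Psi(f(1))=(2,3)$, but in fact $f(1)=01100$ has three $0$s and two $1$s, so $\Psi(f(1))=(3,2)=\Psi(f(0))$. The morphism $f$ is precisely of the type in Theorem~\ref{201003101457}: both images have the same Parikh vector. Consequently your key deduction --- ``among any set of blocks, the $0$-count determines how many are $f(0)$'s versus $f(1)$'s'' --- is false, and you cannot recover which letters of $\bfw$ are $0$ from Abelian information about full blocks. Your fallback observation that ``the letter immediately before $u$, immediately before $v$, and immediately after $v$ \ldots\ are all the letter $0$, sitting at block-start positions'' does not help either, since both $f(0)$ and $f(1)$ begin with $0$; a $0$ at a block-start position in $f(\bfw)$ tells you nothing about the corresponding letter of $\bfw$.

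The paper does not separate ``offset matching'' from ``locating the $0$s''; it handles both at once by a case split on $\nabs{u}\bmod 5$. In the four cases $\nabs{u}\equiv 1,2,3,4\pmod 5$ one writes $u$ and $v$ explicitly as a boundary fragment, a run of full blocks, and another boundary fragment, and checks that the boundary fragments alone (whose Parikh contribution is all that matters, since full blocks are Abelian-neutral) forbid $u\sim_{ab}v$. In the remaining case $\nabs{u}\equiv 0\pmod 5$ one has $u=1\,f(x)\,p$ with $p\in\{0001,0110\}$ (the length-$4$ prefix of the next block), and the choice of $p$ simultaneously fixes the first letter of $v$; comparing Parikh vectors of the fragments forces $p=0001$ for both $u$ and $v$, which is exactly what places the three $0$s in $\bfw$. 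This last point is what your plan misses: the three $0$s in $0x0y0$ come not from counting blocks but from the specific shape of the overhang $0001$ versus $0110$. Your ``offset-matching'' paragraph is also only a sketch; carrying it out amounts to doing the same four-case elimination the paper does.
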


\begin{proof} We divide the proof into five cases depending on the remainder of $\nabs{u}$ mod~5:
\begin{enumerate}
\item We have $\nabs{u} \equiv 0 \pmod{5}$. Then, since $\nabs{u} \neq 0$, the word $u$ is of the form $1 f(x) 0001$ or $1f(x) 0110$. In the first case $v$ must be of the form $1f(y) 0001$ or $1f(y) 0110$ with $\nabs{x} = \nabs{y}$; in the second case $v$ is of the form $0f(y) 0001$ of $0f(y) 0110$. 
In both cases, $|x| = |y|$ so $f(x)$ and $f(y)$ are Abelian equivalent. 
Since $u$ and $v$ are Abelian equivalent, it follows that the only possible case is $u = 1 f(x) 0001$ and $v = 1 f(y) 0001$. Then $\bfw$ has prefix $0x0y0$,  and the claim holds.
\item We have $\nabs{u} \equiv 1 \pmod{5}$. Then $u$ is of the form $u = 1 f(x)$ and $v$ is of the form $v = f(y) 0$ for some words $x$ and $y$. This  is, however, a contradiction because $u$ and $v$ are Abelian equivalent.
\item We have $\nabs{u} \equiv 2 \pmod{5}$. Then $u$ is of the form $u=1f(x)0$ and $v$ has one of the following four forms:
\[
0011 f(y) 000,  \qquad   0011f(y) 011, \qquad 1100 f(y) 000, \qquad 1100 f(y) 011.
\]
But regardless of which of these forms $v$ may have, the words $u$ and $v$ are not Abelian equivalent, a contradiction.
\item We have $\nabs{u} \equiv 3 \pmod{5}$. Then either  $u$ is of the form $1 f(x) 00$ and $v$ is of the form $011f(y)$, or $u$ is of the form $1 f(x) 01$ and $v$ is of the form
$100 f(y)$. But again we quickly verify that $u$ and $v$ cannot be Abelian equivalent in either situation, a contradiction.
\item We have $\nabs{u} \equiv 4 \pmod{5}$. We separate two possibilities: Either $u=1 f(x) 000$ or $u = 1 f(x) 011$. In the first situation, we have either $v = 11 f(y) 00$ or $11 f(y) 01$;
in the latter situation we have either $v = 00 f(y) 00$ or $v =  00 f(y) 01$. But, as in the previous cases, we  see that $u$ and $v$ cannot be Abelian equivalent. This  contradiction concludes the proof.
\end{enumerate}
\end{proof}

\begin{proof}[Proof of Theorem~\ref{P:Ab2}]
Let $h$ be the uniform morphism defined by $h(0) = 01011111$ and $h(1) = 11101111$, and let $\bfw_h$ be the fixed point of $h$ beginning with the letter~$0$.
Our goal is to show that the infinite word $f(\bfw_h)$ is a uniformly recurrent word avoiding Abelian squares in infinitely many positions.
To achieve this, we need the following four steps:
\begin{enumerate}
\item Let $\bfw$ be any binary word beginning with the letter~$0$, and suppose that the word $h(\bfw)$ has a prefix of the form $0x0y0$ with $|x| = |y|$. 
Then there exist words $x'$ and $y'$ with $|x'| = |y'|$ such that $0x = h(0x')$, $0y = h(0y')$, and $0x'0y'0$ is a prefix of $\bfw$.
\begin{proof}
Let the letters in $h(\bfw)$ be indexed starting from~1. Since $\nabs{h(0)} = \nabs{h(1)} = 8$, the way the letter 0 occurs in $h(0)$ and $h(1)$ implies that 0 can occur in $h(\bfw)$ only at positions congruent to 1, 3, or  4 $\pmod 8$. Therefore $\nabs{0x}$ is congruent to 0, 2, or~3 $\pmod 8$;  accordingly $\nabs{0x0y0}$  must be congruent to 1, 5, or 7 $\pmod 8$, respectively.  Only the first case is possible, and thus we have $0x = h(0x')$, $0y = h(0y')$ for some $x', y'$ with $\nabs{x'} = \nabs{y'}$ such that $0x'0y'0$ is a prefix of $\bfw$.
\end{proof}
\item The word $\bfw_h$ does not have a prefix of the form $0x0y0$ with $|x| = |y|$.
\begin{proof}
Immediate consequence of the first step.
\end{proof}
\item The word $\bfw_h$ does not have a prefix of the form $010x0y0$ with $|x| = |y|$.
\begin{proof}
Suppose that $010x0y0$ with $\nabs{x} = \nabs{y}$ is a prefix of $\bfw_h$.
Since the letter 0 occurs only in positions congruent to 1, 3, or 4 $\pmod 8$, we readily check that $\nabs{0x} = \nabs{0y} \equiv 0 \pmod8$.
Hence the fact that $010x0$ is a prefix of $\bfw_h$ implies that $010x0 = h(0x')010$ for some $x'$.
Further, since $010x0y0$ is a prefix of $\bfw_h$ and $010x 0 y 0 =  h(0x') 010 y 0$, it follows that $010y0 = h(0 y') 010$ for some $y'$.
Consequently we have,
\[
010x0y0 = h(0x') 010 y 0 = h(0 x') h(0 y') 010,
\]
and so $\bfw_h$ has prefix $h(0x' 0y'0)$ and thus a prefix $0x'0y'0$, where $\nabs{x'} = \nabs{y'}$. This  contradicts the previous step.
\end{proof}
\item The word $\bfw_h$ does not have a prefix of the form $h^n(01)0x0y0$ with $|x| = |y|$.
\begin{proof}
Suppose that $\bfw_h$ does have prefix $h^n(01)0x0y0$ with $|x| = |y|$. By the previous step, we have $n\geq1$.
Denote $\bfw_h = 01 \bfz$. Since $h^{n}(\bfw_{h}) = \bfw_{h}$, we see that $h^n(\bfz)$ has prefix $0x0y0$, and so by the first step, the word $h^{n-1}(\bfz)$ 
has prefix $0x' 0y'0$ for some $x', y'$ with $\nabs{x'} = \nabs{y'}$.
Consequently $\bfw_h$ has a prefix of the form $h^{n-1}(01) 0x' 0 y' 0$, which is a contradiction by induction.
\end{proof}
\end{enumerate}

Finally we are ready to wrap up the proof of Theorem~\ref{P:Ab2}.
The word $\bfw_h$ is uniformly recurrent since it is a fixed point of a primitive morphism. Thus $f(\bfw_h)$ is uniformly recurrent as well. 
Its Abelian complexity is bounded by Theorem~\ref{201003101457}.
For $n \geq 0$, let $\bfw_n$ be the word determined by $\bfw_h = h^n(01)\bfw_n$.
By the previous step, $\bfw_n$ does not have a prefix of the form $0x0y0$ with $|x| = |y|$.
According to Lemma~\ref{L:f_and_ab_squares}, the word $f(\bfw_n)$ does not have a prefix of the form $0001uv$ with $u$ and $v$ Abelian equivalent. 
This shows that there exist infinitely many positions in $f(\bfw_h)$ in which no Abelian square occurs.
\end{proof}

\begin{remark}\label{134401022010}
An infinite word is \emph{linearly recurrent} if, for all subwords $w$, any two consecutive occurrences of $w$ are within $L\nabs{w}$ positions, 
where $L>0$ is constant. The infinite words constructed in the proofs of Theorems~\ref{134801022010} and~\ref{P:Ab2} are not only uniformly, 
but even linearly recurrent.  In fact, F.~Durand~\cite{Durand1998} showed that every fixed point of a primitive morphism is linearly recurrent
(see also~\cite{Durand2008} for a precise statement of this).  
\end{remark}

\section{The general case} \label{201003121133}

The set of positions in the infinite word  $f(\bfw_{h})$ given in the proof of Theorem~\ref{P:Ab2} 
in which no Abelian squares occurs has density~$0.$
This suggests the following question: Does there exist an infinite word with bounded Abelian complexity avoiding Abelian cubes in a set of positions with positive density?  The answer is no. 
Indeed, by using Szmer\'edi's theorem~\cite{Szemeredi1975}  instead of van der Waerden's theorem in the proof of Theorem~5.1 in~\cite{RicSaaZam2010}, we get the following result:
\begin{theorem}
Let \bfw be an infinite word with bounded Abelian complexity, and let $D\subset \N$
be a set of natural numbers with positive upper density, that is 
\[
\limsup_{\ntoinf} \frac{D \cap \nsset{1,2, \ldots, n}  }{n} >0.
\] 
Then, for every integer $k\geq 2$,  there exists an integer $i\in D$ such that there is an Abelian $k$-power occurring at position $i$ in \bfw.  
\end{theorem}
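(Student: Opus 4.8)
The plan is to adapt the proof of Theorem~5.1 in~\cite{RicSaaZam2010} (the theorem restated above which gives Abelian $k$-powers in every word with bounded Abelian complexity), replacing van der Waerden's theorem by Szemer\'edi's theorem. First I would recall the mechanism of that original proof: since $\bfw$ has bounded Abelian complexity, by Lemma~\ref{220220101341} it is $C$-balanced for some $C$, hence there are only finitely many Parikh vectors of subwords of any fixed length $n$, and in fact the Parikh vector of the length-$n$ prefix $\Psi(w_0 w_1 \cdots w_{n-1})$ lies, after subtracting the ``expected'' vector $\frac{n}{\nnorm{\cdot}}$ contribution, in a bounded region of the integer lattice. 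The key object is the colouring of $\N$ that assigns to each position $n$ the (bounded) ``defect'' Parikh vector of the prefix of length $n$; call this colour $c(n)$. There are only finitely many colours, say $N$ of them, where $N$ depends only on $C$ and $\#A$.

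The second step is the arithmetic-combinatorics input. For an Abelian $k$-power with period $\ell$ to occur at position $i$, it suffices that the $k+1$ prefixes ending at positions $i, i+\ell, i+2\ell, \ldots, i+k\ell$ all receive the same colour $c$; indeed then each block between consecutive sampled positions has the same Parikh vector, so $w_i \cdots w_{i+k\ell-1}$ is an Abelian $k$-power. So I need: inside the prescribed set $D$ of positive upper density, a monochromatic arithmetic progression of length $k+1$ whose common difference equals the gap used, with the progression lying suitably. Concretely, fix a colour class $D_c = \{\, n \in D \mid c(n) = c \,\}$; since $D = \bigcup_c D_c$ is a finite union and $D$ has positive upper density, at least one $D_c$ has positive upper density. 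Now apply Szemer\'edi's theorem to $D_c$: it contains arithmetic progressions of every finite length, in particular one of length $k+1$, say $i, i+\ell, \ldots, i+k\ell$. All $k+1$ of these positions lie in $D_c$, hence share colour $c$, hence (by the previous observation) there is an Abelian $k$-power occurring at position $i \in D \subset \N$, as required. One should state Szemer\'edi's theorem in the form ``every subset of $\N$ of positive upper density contains arbitrarily long arithmetic progressions'' so that no lower-density or uniformity hypothesis is needed.

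There is one technical point to get right, which I expect to be the main (minor) obstacle: making precise the finiteness of the colouring so that the pigeonhole ``some $D_c$ has positive upper density'' is valid, and checking that the sampled positions one gets from Szemer\'edi genuinely force an Abelian $k$-power rather than merely $k+1$ equal-colour prefixes in some weaker sense. For the colouring, I would define $c(n) = \Psi(w_0 w_1 \cdots w_{n-1}) \bmod{}$ nothing — rather, observe directly that for a $C$-balanced word all the differences $\Psi(w_i \cdots w_{i+\ell-1}) - \Psi(w_j \cdots w_{j+\ell-1})$ are bounded entrywise by $C$, so fixing a basepoint the prefix Parikh vectors modulo the ``average'' live in a box of side $O(C)$; discretising this box gives finitely many colours, and two prefixes of the same colour with the same length-difference $\ell$ have Abelian-equivalent intervening blocks. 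Here equal length-differences are automatic for an arithmetic progression of step $\ell$. The only real content beyond~\cite{RicSaaZam2010} is the substitution of the density version of van der Waerden (= Szemer\'edi) for the partition version, which is exactly why the hypothesis weakens from ``all of $\N$'' to ``any positive-upper-density $D$''; since the proof in~\cite{RicSaaZam2010} is black-boxed and the combinatorial core is unchanged, I would keep the write-up short, emphasising only the pigeonhole step that splits $D$ into monochromatic pieces and the single invocation of Szemer\'edi's theorem.
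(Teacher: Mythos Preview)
Your proposal is correct and matches the paper's approach exactly: the paper gives no detailed proof but simply states that one obtains the result ``by using Szemer\'edi's theorem instead of van der Waerden's theorem in the proof of Theorem~5.1 in~\cite{RicSaaZam2010}'', which is precisely the substitution you carry out (finite colouring of prefix Parikh data, pigeonhole on $D$ to get a colour class of positive upper density, then Szemer\'edi in place of van der Waerden). The one technical point you flag---making the colouring genuinely finite---is handled in the cited proof (e.g.\ by reducing each coordinate of the prefix Parikh vector modulo $2C+1$, so that equal colours along an arithmetic progression force equal block Parikh vectors via the $C$-balance bound), and is not new content here.
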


\noindent This result naturally leads to the following question.

\begin{openproblem}\label{223720042010}
Does there exist  an infinite binary word that avoids Abelian squares in positions with positive upper density? 
\end{openproblem}

Let us consider a slightly modified version of this question. We say that an infinite binary word $\bfw$ satisfies \emph{BAS property} if
the set of positions in \bfw that avoid Abelian squares have positive lower density. This means that such positions in \bfw occur in bounded gaps.
In what follows we show that the existence of a word with BAS property is equivalent to the following well-known
question asked independently  by Pirillo--Varricchio~\cite{PirVar1994} and Halbeisen--Hungerb\"{u}hler~\cite{HalHun2000}:
\begin{quote}  
\textit{Does there exist an infinite word  over  a finite set of  integers such that no two consecutive blocks of the same length have the same sum?}
\end{quote}

We say that an infinite word satisfying the above condition has the \emph{PVHH property}. 

\begin{theorem}
There exists an infinite word satisfying the PVHH property if and only if there exist an infinite binary word satisfying the BAS property. 
\end{theorem}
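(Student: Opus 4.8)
The plan is to prove the two implications separately; the substantive part is ``PVHH implies BAS'', while the converse is short. Both directions rest on one elementary reformulation. For a binary word $\bfx = x_1 x_2\cdots$ put $Z(n) = \nabs{x_1\cdots x_n}_0$; then a length-$2\ell$ factor of $\bfx$ beginning at position $p$ is an Abelian square exactly when $Z(p-1) + Z(p-1+2\ell) = 2Z(p-1+\ell)$, i.e.\ when $Z(p-1),\,Z(p-1+\ell),\,Z(p-1+2\ell)$ form an arithmetic progression; likewise an integer word has an additive square exactly when three equally spaced values of its partial-sum function are in arithmetic progression. So ``$\bfx$ is BAS'' asserts that there is a syndetic set $D\subseteq\N$ (consecutive elements differing by at most some $g_0$; I take ``bounded gaps'' as the working form of the BAS property, as indicated just after its definition) such that no such progression passes through $Z(p-1)$ for $p\in D$, whereas ``$\bft$ has the PVHH property'' asserts that \emph{no} such progression occurs among the partial sums of $\bft$.

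For ``BAS implies PVHH'', let $\bfx$ be a BAS word and let $D = \{q_1 < q_2 < \cdots\}$ be its set of positions that begin no Abelian square, so $q_{j+1}-q_j\le g_0$ for all $j$. I would define $\bft = t_1 t_2 \cdots$ by
\[
t_j \;=\; \bparen{\, q_{j+1}-q_j\,,\;\; \nabs{x_{q_j} x_{q_j+1}\cdots x_{q_{j+1}-1}}_1 \,},
\]
a word over the finite set $\{1,\dots,g_0\}\times\{0,\dots,g_0\}$, which I identify with a finite set of integers (the PVHH question being unaffected by replacing $\N$ with $\N^k$). If $\bft$ had an additive square, say $t_{a+1}\cdots t_{a+\ell}$ and $t_{a+\ell+1}\cdots t_{a+2\ell}$ have equal sum, then equating first coordinates gives $q_{a+\ell+1}-q_{a+1}=q_{a+2\ell+1}-q_{a+\ell+1}$, so the factor $x_{q_{a+1}}\cdots x_{q_{a+2\ell+1}-1}$ has even length with $q_{a+\ell+1}$ as its midpoint; equating second coordinates shows its two halves contain equally many $1$'s, hence are Abelian equivalent. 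Thus this factor is an Abelian square beginning at $q_{a+1}\in D$, which is impossible. So $\bft$ has the PVHH property.

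For ``PVHH implies BAS'', let $\bft$ be a PVHH word; translating all its letters by a fixed constant $c$ changes each sum of a length-$\ell$ block by $c\ell$ and so preserves the PVHH property, and we may assume $\bft$ is over $\{0,1,\dots,M\}$. I would fix a large integer $N$ and a morphism $g$ sending each letter $v$ to a binary block $g(v)\in\{0,1\}^N$, and set $\bfx := g(\bft)$, so that the block boundaries $1,N+1,2N+1,\dots$ form a syndetic set; the aim is to choose $g$ so that \emph{no Abelian square of $\bfx$ begins at a block boundary}, which makes $\bfx$ BAS. Writing a putative such Abelian square as having half-length $\ell = jN + r$ with $0\le r<N$ and starting at the boundary $kN$, and expanding $Z(kN)+Z(kN+2\ell)=2Z(kN+\ell)$ in terms of the cumulative zero-counts of whole blocks and of the profiles $\phi_v(r) :=$ (number of $0$'s among the first $r$ letters of $g(v)$) — while choosing $g$ with $\nabs{g(v)}_0 = v+1$ — the whole-block contributions collapse to $T(k)+T(k+2j)-2T(k+j)$, where $T$ is the partial-sum function of $\bft$; this is nonzero whenever $j\ge1$ because $\bft$ has no additive square. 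What remains is a relation among the partial-block corrections $\phi_v(r),\,\phi_w(2r)$, and one must choose the blocks $g(v)$ so that this relation forces first $j=0$ (using a rigidity of the profiles) and then $r=0$ (the case $j=0,\ r\ne0$ being ruled out by requiring that each $g(v)$ itself begin in no Abelian square). Alternatively one can route the argument through the morphism $f$ of Section~\ref{201003121132}, which by Lemma~\ref{L:f_and_ab_squares} lifts ``beginning in an Abelian square'' to ``beginning in a factor $0x0y0$ with $\nabs{x}=\nabs{y}$'': one encodes $\bft$ into a binary word $\bfw$ having a syndetic set of positions at which no such factor begins, and takes $\bfx := f(\bfw)$.

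The hard part is precisely the construction of $g$ (or the encoding routed through $f$). The naive candidates all fail: gathering the $0$'s of a block at its front or back produces short Abelian squares (``$00$'', ``$0101$'', \dots) at a block boundary, and separating blocks by a single marker letter produces Abelian squares whose period straddles a boundary by an offset that can absorb a difference of consecutive block-sums equal to $\pm1$, which a PVHH word is allowed to have. Isolating a family of blocks that simultaneously encodes $\{0,\dots,M\}$, begins in no Abelian square, and has profiles rigid enough that the residual corrections never cancel a nonzero $T(k)+T(k+2j)-2T(k+j)$ — together with the ensuing case analysis on $r \bmod N$ and on the placement of the two periods relative to block boundaries — is the technical core. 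A secondary, routine point is the identification in the first direction of a finite set of integer vectors with a finite set of integers.
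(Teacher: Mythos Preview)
Both directions of your proposal have genuine gaps.

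For BAS $\Rightarrow$ PVHH, your word $\bft$ lives over a finite subset of $\N^2$, and you dismiss the passage to a finite subset of $\N$ as a routine identification. This is the crux of the matter, not a formality: there is no evident map $\N^2\to\N$ under which additive squares pull back to additive squares, because the difference of block-sums in each coordinate can be as large as a constant times the (unbounded) half-length, so no fixed weighting separates the two coordinates. The paper does not invoke any such reduction. Instead it builds an explicit one-dimensional encoding $w_n = u_n\cdot 2^M + v_n$, where $\bfv$ is a periodic integer sequence with $\sum_{h=i}^{j-1} v_h = 2^{\,j\bmod M}-2^{\,i\bmod M}$; the point is that these $\bfv$-sums are bounded in absolute value by $2^M$ \emph{independently of the length $j-i$}, so from a single equality of integer sums one recovers separately equality of the $u$-sums and congruence of the lengths modulo $M$. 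Taking $M=2N$ and using the bounded-gap hypothesis (which forces the two relevant lengths to differ by less than $2N$) then yields equality of lengths and hence an Abelian square in $\bfu$ at a forbidden position. This integer encoding is precisely the content your ``identification'' is hiding.

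For PVHH $\Rightarrow$ BAS you correctly identify that a suitable morphism is needed, but leave its construction open and call it the technical core. In the paper this is actually the short direction: after an affine change making every letter an odd positive integer, the morphism $\tau(a)=01^{a}0$ already does the job. If some $\tau(x_ix_{i+1}\cdots)$ began in an Abelian square $uv$, then evenness of $\nabs{uv}_0$ gives $uv=\tau(w)$; evenness of $\nabs{uv}_1$ (each letter is odd) gives $\nabs{w}$ even, say $w=w_1w_2$ with $\nabs{w_1}=\nabs{w_2}$; since every $\tau$-block contributes exactly two $0$'s, $\nabs{u}_0=\nabs{v}_0$ forces $u=\tau(w_1)$, $v=\tau(w_2)$; and finally $\nabs{u}_1=\nabs{v}_1$ gives $\sum w_1=\sum w_2$, contradicting PVHH. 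So your assessment of where the difficulty lies is inverted: PVHH $\Rightarrow$ BAS is a two-line morphism argument once one hits on $\tau$, while BAS $\Rightarrow$ PVHH carries the real work, namely the explicit integer encoding above.
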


We prove the claim in the next five lemmas. If $x$ is a finite word whose letters are integers, let $\sum x$ denote the sum of the letters in~$x$.

\begin{lemma}
If there exists a word satisfying the PVHH property, 
then there exists an infinite binary word satisfying the BAS property.
\end{lemma}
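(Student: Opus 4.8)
The plan is to take an infinite word $\bfu = u_1 u_2 u_3 \cdots$ over a finite set $S \subset \mathbb{Z}$ with the PVHH property and encode each letter $u_i$ as a short block over $\{0,1\}$ in such a way that: (i) the encoding is injective enough that ``no two consecutive equal-length blocks of $\bfu$ have equal sum'' translates into ``infinitely many, and in fact boundedly-gapped, positions of the binary word where no Abelian square begins''; and (ii) the positions we ``mark'' as square-free-starting points occur with bounded gaps, giving positive lower density. Concretely, since $S$ is finite we may assume (after an affine shift, which does not affect the PVHH property) that $S \subseteq \{1, 2, \ldots, m\}$ for some fixed $m$, and encode the letter $a \in S$ by the binary block $\beta(a) = 0\,1^{a}\,0\,1^{m-a}$, all of common length $m+2$. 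Let $\bfw = \beta(u_1)\beta(u_2)\cdots$. The key point of this particular encoding is that $|\beta(a)|_1 = m$ is constant, so a factor of $\bfw$ that is aligned to block boundaries and spans $k$ consecutive blocks has a prescribed number of $1$'s, while its split into sub-blocks records the partial sums of the $u_i$; unequal sums of two consecutive length-$\ell$ windows of $\bfu$ will force a Parikh-vector mismatch of the corresponding aligned binary factors.

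The main steps, in order, are as follows. First I would fix the encoding and record its elementary properties: each $\beta(a)$ begins with $0$, has exactly two $0$'s, and the positions of these $0$'s inside a block determine $a$ uniquely; hence in $\bfw$ the letter $0$ occurs only at positions $\equiv 1$ or $\equiv a_i+2 \pmod{m+2}$ in the $i$-th block, and from the pattern of $0$'s one can recover the block decomposition and thus $\bfu$. Second, I would identify a residue class $r \pmod{m+2}$ such that every position of $\bfw$ lying in class $r$ is the start of \emph{no} Abelian square: suppose an Abelian square $pq$ ($p \sim_{ab} q$, $|p|=|q|$) started at such a position. Using the block structure I would argue that $|p|$ must be a multiple of $m+2$ — here the constancy of $|\beta(a)|_1$ is what rules out non-aligned periods, in the same spirit as the parity/alignment arguments in Lemma~\ref{100120102316} and in the proof of Theorem~\ref{P:Ab2} — and then $p = \beta(u_{j+1})\cdots\beta(u_{j+\ell})$, $q = \beta(u_{j+\ell+1})\cdots\beta(u_{j+2\ell})$ for suitable $j,\ell$, so $|p|_1 = |q|_1$ automatically, while $|p|_0 = |q|_0$ forces $\sum_{t=1}^{\ell} u_{j+t} = \sum_{t=1}^{\ell} u_{j+\ell+t}$ — exactly the collision forbidden by PVHH. (I would need to also handle small periods $|p| < m+2$ directly by inspecting how a block or two overlaps a position in class $r$; this is a finite case check given the explicit shape $0\,1^a\,0\,1^{m-a}$.) Third, since the marked positions form a full residue class mod $m+2$, they have density $1/(m+2) > 0$ and occur in gaps of size exactly $m+2$; hence $\bfw$ has the BAS property.

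The hard part will be the second step: pinning down the allowed period of a hypothetical Abelian square so that it must be block-aligned, and then treating the short-period cases. The delicate point is that an Abelian square in $\bfw$ need not respect block boundaries a priori, so I must use the rigidity of the encoding — two $0$'s per block at letter-dependent offsets, all blocks of equal length, constant number of $1$'s per block — to show that any period producing $|p|_0 = |p|_1$-balanced halves is forced to be a multiple of $m+2$ starting at a block boundary, which is precisely why I choose the marked class $r$ to be the block-start class $r \equiv 1$. Once alignment is established, the translation into the PVHH sum condition is immediate. A secondary (routine) obstacle is checking that $\bfw$ is a genuine infinite binary word and that the affine normalization $S \subseteq \{1,\ldots,m\}$ is harmless, both of which are straightforward.
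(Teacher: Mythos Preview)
Your proposal has a genuine gap in the choice of encoding. With $\beta(a) = 0\,1^{a}\,0\,1^{m-a}$ every block has the \emph{same} Parikh vector $\Psi(\beta(a)) = (2,m)$, independent of $a$. Consequently, for any two letters $a,b$ the word $\beta(a)\beta(b)$ is already an Abelian square, and it begins precisely at a block-start position. So the positions you single out (the class $r\equiv 1 \pmod{m+2}$) are not Abelian-square-free at all; in fact they are the worst possible positions. Your sentence ``$|p|_0 = |q|_0$ forces $\sum_{t=1}^{\ell} u_{j+t} = \sum_{t=1}^{\ell} u_{j+\ell+t}$'' is simply false: once $p$ and $q$ each span $\ell$ full blocks, $|p|_0 = 2\ell = |q|_0$ and $|p|_1 = m\ell = |q|_1$ hold automatically, regardless of the $u$'s. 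The position of the second $0$ inside a block does record $a$, but Parikh vectors forget positions, so that information is lost exactly where you need it.

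The paper's encoding avoids this by being deliberately \emph{non-uniform}: after an affine shift making all letters odd positive integers, it sets $\tau(a) = 0\,1^{a}\,0$, so $|\tau(a)|_0 = 2$ is constant but $|\tau(a)|_1 = a$ carries the value. If $uv$ with $u\sim_{ab}v$ is a prefix of $\tau(x_i x_{i+1}\cdots)$, then $|uv|_0$ even forces $uv = \tau(w)$; oddness of each $a$ makes $|uv|_1$ even force $|w|$ even, say $w = w_1 w_2$ with $|w_1| = |w_2|$; the constant $0$-count per block then gives $u = \tau(w_1)$, $v = \tau(w_2)$; and finally $|u|_1 = \sum w_1 = \sum w_2 = |v|_1$ contradicts PVHH. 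The block-start positions have bounded gaps because the alphabet is finite, giving BAS. If you want to salvage a fixed-length encoding you must let the Parikh vector of $\beta(a)$ depend on $a$, but then the alignment argument becomes harder; the paper's variable-length morphism together with the oddness trick is what makes both the alignment and the sum transfer go through cleanly.
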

\begin{proof}
Suppose that an infinite word $\bfx = x_{1} x_{2} \ldots $ satisfies the PVHH property.
Suppose that $\bfx$ is over an alphabet $\calA = \nsset{a_{1}, a_{2}, \ldots, a_{k}}$, where each $a_{i}$ is an integer.
Since the set of words satisfying the PVHH property is clearly closed under affine transformations, 
we may assume that each $a_{i}$ is an odd positive integer.
Define a morphism $\tau$ by $\tau(a_{i}) = 01^{a_{i}}0$. We claim that the word $\tau(\bfx)$ has the BAS property.
We prove this by showing that, for all $i=1,2,\ldots$, the word $\bfz := \tau(x_{i} x_{i+1} \cdots)$ does not have an Abelian square as a prefix.

Suppose the contrary; a word $uv$ is a prefix of \bfz with $u$ and $v$ Abelian equivalent. 
We proceed by the following reasoning analogous to what we used in the proof of Theorem~\ref{134801022010}.
Since $\nabs{uv}_0$ is even, the form of the $\tau(a_{i})$ implies that $uv = \tau(w)$ for some word $w\in \calA$. Similarly, since $\nabs{uv}_{1}$ is even, it follows that $\nabs{w}$ is even,
and we can write $w = w_1 w_2$ with $\nabs{w_1} = \nabs{w_2}$. Since $\nabs{\tau(a_{i})}_0 = 2$ for all $i$, 
we have $\nabs{\tau(w_1)}_0 = \nabs{\tau(w_2)}_0$, and thus $u = \tau(w_1)$ and $v = \tau(w_2)$. But now,
\[
\nabs{u}_{1} = \sum_{i=1}^{k} a_{i} \nabs{w_{1}}_{a_{i}} = \sum w_{1}  \qqtext{and} \nabs{v}_{1}  = \sum_{i=1}^{k} a_{i} \nabs{w_{2}}_{a_{i}} = \sum w_{2}
\]
which contradicts the PVHH property because $\nabs{u}_{1} = \nabs{v}_{1}$.
\end{proof}

We begin proving the converse by defining the following word. Let $M$ be a positive integer, and let $\bfv = v_{0} v_{1} \ldots$ be the periodic sequence given by
\[
v_{n} = 
\begin{cases}
2^{\, n \bmod M} & \text{if $n \not\equiv -1 \pmod{M}$} \\
1 - 2^{M-1} & \text{otherwise}.
\end{cases}
\]

\begin{lemma}\label{221920042010}
Let $x,y$ be two factors of \bfv. If $\sum x = \sum y$, then $\nabs{x} \equiv \nabs{y} \bmod{M}$.
\end{lemma}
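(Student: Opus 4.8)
The plan is to reduce the statement to an explicit closed form for the partial sums of $\bfv$, and from there to an elementary fact about powers of $2$. First I would record the arithmetic identity that makes the sequence work: the sum of the letters over one full period vanishes, since $\sum_{t=0}^{M-2} 2^{t} + \bparen{1 - 2^{M-1}} = \bparen{2^{M-1} - 1} + 1 - 2^{M-1} = 0$.

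Next, set $S_{n} = v_{0} + v_{1} + \cdots + v_{n-1}$ for $n \geq 0$ (so $S_{0} = 0$). Writing $n = qM + r$ with $0 \leq r < M$ and using the vanishing period sum, one gets $S_{n} = S_{r}$. The exceptional value $1 - 2^{M-1}$ occurs only at indices congruent to $-1 \pmod M$, whereas the indices $0, 1, \ldots, r-1$ contributing to $S_{r}$ are all at most $M - 2$; hence $S_{r} = \sum_{t=0}^{r-1} 2^{t} = 2^{r} - 1$, and therefore $S_{n} = 2^{\,n \bmod M} - 1$ for every $n \geq 0$. Consequently, if $x = v_{i} v_{i+1} \cdots v_{i+\ell-1}$ is a factor of $\bfv$, then $\sum x = S_{i+\ell} - S_{i} = 2^{\,(i+\ell)\bmod M} - 2^{\,i \bmod M}$, and similarly $\sum y = 2^{\,(j+m)\bmod M} - 2^{\,j \bmod M}$ when $y = v_{j} v_{j+1}\cdots v_{j+m-1}$.

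Now set $a = i \bmod M$, $b = (i+\ell) \bmod M$, $c = j \bmod M$, $d = (j+m) \bmod M$, all lying in $\nsset{0, 1, \ldots, M-1}$; since $\ell \equiv b - a \pmod M$ and $m \equiv d - c \pmod M$, it suffices to prove $b - a = d - c$. The hypothesis $\sum x = \sum y$ now reads $2^{b} - 2^{a} = 2^{d} - 2^{c}$. If this common value is $0$, then $a = b$ and $c = d$, and we are done. Otherwise, negating both sides if necessary so that the common value is positive, we may assume $b > a$ and $d > c$; factoring out the smaller power of $2$ gives $2^{a}\bparen{2^{b-a} - 1} = 2^{c}\bparen{2^{d-c} - 1}$, and since $2^{b-a} - 1$ and $2^{d-c} - 1$ are odd, unique factorization forces $a = c$ and $b - a = d - c$, whence $\ell \equiv m \pmod M$. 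I do not foresee a real obstacle; the only points demanding a little care are verifying that all four exponents genuinely lie in $\nsset{0, \ldots, M-1}$, so that the $2$-adic valuation comparison is an exact equality rather than merely a congruence, and handling the sign cases in the last display, which is why I normalize to a positive common value before factoring.
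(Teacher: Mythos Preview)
Your proof is correct and follows essentially the same route as the paper's. Both arguments compute the partial sums $S_n = 2^{\,n\bmod M}-1$, deduce $\sum x = 2^{\,(i+\ell)\bmod M} - 2^{\,i\bmod M}$, and then observe that a nonzero value of the form $2^{b}-2^{a}$ with $a,b\in\nsset{0,\ldots,M-1}$ determines the pair $(a,b)$ uniquely; the paper states this last step in one sentence (``the congruence classes of $i$ and $j$ can be recovered from $\sum x$''), while you spell it out via the $2$-adic valuation, which is exactly the justification behind that sentence.
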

\begin{proof}
Observe first that $\sum_{n=0}^{i-1} v_{n} = 2^{i \bmod M}-1$. Thus if $x$ occurs at position $i$ and has length $j-i$,
then $\sum x = 2^{j \bmod M} - 2^{i \bmod M}$.
If $\sum x = 0$, then $|x| \equiv 0 \pmod M$.
Otherwise, the congruence classes mod $M$ of $i$ and $j$, hence of $|x|$,
can be recovered from $\sum x$.
\end{proof}

\begin{lemma}
Let $\bfu = u_{0} u_{1} \ldots $ be any infinite binary word, and define a word $\bfw=w_{0} w_{1} \ldots$ by
\[
w_{n} = u_{n} \cdot 2^{M} + v_{n}.
\]
If $xy$ is a factor of \bfw, say $x = w_{i} w_{i+1} \cdots w_{j-1}$ and $y = w_{j} w_{j+1} \cdots w_{k-1}$, 
with $\sum x = \sum y$, then $\nabs{x} \equiv \nabs{y}\pmod{M}$ and
\begin{equation}\label{115317042010}
\sum_{h=i}^{j-1} u_{h} = \sum_{h=j}^{k-1} u_{h}. 
\end{equation}
\end{lemma}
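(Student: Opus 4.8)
The plan is to exploit the structure $w_n = u_n \cdot 2^M + v_n$ to separate the contribution of the ``high bits'' (coming from $\bfu$) and the ``low bits'' (coming from $\bfv$). First I would observe that $\sum x = \sum y$ can be rewritten as
\[
2^M\Bigl(\sum_{h=i}^{j-1} u_h - \sum_{h=j}^{k-1} u_h\Bigr) = \sum_{h=j}^{k-1} v_h - \sum_{h=i}^{j-1} v_h = \sum_{\bfv}(y') - \sum_{\bfv}(x'),
\]
where $x' = v_i \cdots v_{j-1}$ and $y' = v_j \cdots v_{k-1}$ are the corresponding factors of $\bfv$. So the right-hand side is divisible by $2^M$.

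The key step is then to bound the magnitude of $\sum_{\bfv}(x')$ and $\sum_{\bfv}(y')$. From the proof of Lemma~\ref{221920042010} we have $\sum_{\bfv}(x') = 2^{j \bmod M} - 2^{i \bmod M}$, and each of these terms lies in $\{0, 1, 2, \ldots, 2^{M-1}\}$; hence $\babs{\sum_{\bfv}(y') - \sum_{\bfv}(x')} \leq 2^{M-1} < 2^M$. A multiple of $2^M$ whose absolute value is strictly less than $2^M$ must be zero, so both sides of the displayed equation vanish. The vanishing of the left-hand side is exactly equation~\eqref{115317042010}, and the vanishing of the right-hand side gives $\sum_{\bfv}(x') = \sum_{\bfv}(y')$, whereupon Lemma~\ref{221920042010} applied to the factors $x'$ and $y'$ of $\bfv$ yields $\nabs{x'} \equiv \nabs{y'} \pmod M$; since $\nabs{x'} = \nabs{x}$ and $\nabs{y'} = \nabs{y}$, this is the congruence claimed.

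I expect the only subtlety to be making sure the bound $\babs{\sum_{\bfv}(y') - \sum_{\bfv}(x')} < 2^M$ is genuinely strict. This is where the particular choice of $\bfv$ matters: because the partial sums $\sum_{n=0}^{i-1} v_n = 2^{i \bmod M} - 1$ take only the $M$ values $0, 1, 3, 7, \ldots, 2^{M-1}-1$, any factor sum is a difference of two such values and therefore has absolute value at most $2^{M-1} - 1 < 2^M$. Everything else is bookkeeping: one just has to be careful that $x$ and $y$, being consecutive factors of $\bfw$ of the forms indicated, correspond position-for-position to consecutive factors $x'$ and $y'$ of $\bfv$ (and likewise to blocks of $\bfu$), which is immediate from the definition $w_n = u_n 2^M + v_n$. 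No genuine obstacle is anticipated; this lemma is essentially a ``base-$2^M$ digit separation'' argument.
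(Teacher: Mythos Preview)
Your argument is correct and is essentially the paper's own proof: separate the $2^M$-weighted $u$-part from the $v$-part, bound the $v$-contribution strictly below $2^M$, and conclude both pieces vanish, then invoke Lemma~\ref{221920042010}. One small slip: the intermediate bound $\babs{\sum_{\bfv}(y') - \sum_{\bfv}(x')} \leq 2^{M-1}$ in your first paragraph is not right (the difference can reach $2^M-2$), but your second paragraph gives the correct estimate $\babs{\sum_{\bfv}(x')},\,\babs{\sum_{\bfv}(y')} \le 2^{M-1}-1$, hence the difference is at most $2(2^{M-1}-1) < 2^M$, which is exactly what the paper uses.
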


\begin{proof}
First off, we have
\[
\sum x = 2^{M} \cdot \sum_{h=i}^{j-1} u_{h} + \sum_{h=i}^{j-1} v_{h} \qqtext{and}
\sum y = 2^{M} \cdot \sum_{h=j}^{k-1} u_{h} + \sum_{h=j}^{k-1} v_{h}.
\]
Since $\sum x = \sum y$, this gives
\[
2^{M} \cdot \Bparen{\sum_{h=i}^{j-1} u_{h} - \sum_{h=j}^{k-1} u_{h}} + \Bparen{\sum_{h=i}^{j-1} v_{h}  - \sum_{h=j}^{k-1} v_{h}} = 0.
\]
The inequality
\[
\Babs{\sum_{h=i}^{j-1} v_{h}  - \sum_{h=j}^{k-1} v_{h}} \leq \Babs{\sum_{h=i}^{j-1} v_{h}} + \Babs{\sum_{h=j}^{k-1} v_{h}} \leq 
2 \cdot\bparen{2^{M-1} -1} < 2^{M}
\]
thus implies that
\[
\Bparen{\sum_{h=i}^{j-1} u_{h} - \sum_{h=j}^{k-1} u_{h}} = 0 \qqtext{and} \Bparen{\sum_{h=i}^{j-1} v_{h}  - \sum_{h=j}^{k-1} v_{h}} = 0.
\]
The first equation gives~\eqref{115317042010}, and the latter equation with Lemma~\ref{221920042010} implies that $\nabs{x} \equiv \nabs{y} \pmod{M}$.
\end{proof}

\begin{lemma}\label{003218042010}
Let $N$ be a positive integer. Suppose that \bfu is a binary word with positions $n_{0}, n_{1}, n_{2}, \ldots$ such that 
$iN \leq n_{i} < (i+1) N$ and each position $n_{i}$ avoids Abelian squares.
Define a new word \bfz by
\[
z_{i} = w_{n_{i}} +  w_{n_{i} +1} + \cdots + w_{n_{i+1}-1},
\]
where \bfw is the word defined in the previous lemma with $M = 2N$.
Then \bfz satisfies the PVHH property.
\end{lemma}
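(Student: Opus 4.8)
The plan is to show that if some finite block $\bfz_a \cdots \bfz_{b-1}$ and the following equal-length block $\bfz_b \cdots \bfz_{c-1}$ of $\bfz$ had the same sum, then the original word $\bfu$ would have an Abelian square at one of the positions $n_i$, contradicting the hypothesis. So suppose $\sum_{i=a}^{b-1} z_i = \sum_{i=b}^{c-1} z_i$ with $b-a = c-b$. Unfolding the definition of $z_i$, the left sum telescopes into $\sum_{h=n_a}^{n_b - 1} w_h$ and the right into $\sum_{h=n_b}^{n_c - 1} w_h$; in other words, writing $x = w_{n_a}\cdots w_{n_b - 1}$ and $y = w_{n_b}\cdots w_{n_c-1}$, we have $\sum x = \sum y$, where $x$ and $y$ are consecutive factors of $\bfw$.

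Next I would invoke the previous lemma. Since $\sum x = \sum y$, that lemma gives both $\nabs{x}\equiv\nabs{y}\pmod M$ and $\sum_{h=n_a}^{n_b-1} u_h = \sum_{h=n_b}^{n_c-1} u_h$, i.e. $\babs{u_{n_a}\cdots u_{n_b-1}}_1 = \babs{u_{n_b}\cdots u_{n_c-1}}_1$. To conclude that $\bfu$ has an Abelian square at position $n_a$, I still need the two $\bfu$-blocks to have equal length, namely $n_b - n_a = n_c - n_b$. This is where the bound $M = 2N$ enters: from $iN \le n_i < (i+1)N$ we get $n_b - n_a = (b-a)N + \delta$ and $n_c - n_b = (c-b)N + \delta'$ with $\babs{\delta}, \babs{\delta'} < N$, so $\babs{(n_b-n_a) - (n_c-n_b)} < N \le M/2 < M$ (using $b-a = c-b$). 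On the other hand $\nabs{x} = n_b - n_a$ and $\nabs{y} = n_c - n_b$ are congruent mod $M$; a difference that is both a multiple of $M$ and strictly smaller than $M$ in absolute value must be $0$. Hence $n_b - n_a = n_c - n_b$.

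Therefore $u_{n_a}\cdots u_{n_b-1}$ and $u_{n_b}\cdots u_{n_c-1}$ are two consecutive factors of $\bfu$ of the same length with the same number of $1$s, hence the same number of $0$s, so they are Abelian equivalent; that is, $\bfu$ has an Abelian square as a factor beginning at position $n_a$. But $n_a$ is one of the designated positions $n_i$ which by assumption avoid Abelian squares, a contradiction. Consequently no two consecutive equal-length blocks of $\bfz$ have the same sum, which is exactly the PVHH property for $\bfz$.

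The only subtle point — and the step I expect to matter most — is the length-matching argument in the second paragraph: one must use the near-uniform spacing $iN \le n_i < (i+1)N$ together with the choice $M = 2N$ to force the mod-$M$ congruence of $\nabs{x}$ and $\nabs{y}$ to upgrade into an actual equality of the $\bfu$-block lengths. Everything else is bookkeeping: the telescoping identity defining $z_i$ and a direct application of the preceding lemma.
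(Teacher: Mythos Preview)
Your argument is essentially identical to the paper's: assume two consecutive equal-length blocks of $\bfz$ have equal sum, pass to the corresponding consecutive factors $x,y$ of $\bfw$, apply the preceding lemma to get both the congruence $\nabs{x}\equiv\nabs{y}\pmod{M}$ and equality of the $\bfu$-sums, then use the spacing hypothesis to upgrade the congruence to $\nabs{x}=\nabs{y}$ and obtain an Abelian square in $\bfu$ at position $n_a$.

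One arithmetic slip to fix: from $\nabs{\delta},\nabs{\delta'}<N$ you only get $\babs{(n_b-n_a)-(n_c-n_b)}=\nabs{\delta-\delta'}<2N$, not $<N$ as you wrote (the paper obtains exactly the bound $-2N<\nabs{y}-\nabs{x}<2N$). Since $M=2N$, this corrected bound is still $<M$, so the mod-$M$ congruence still forces equality and your proof goes through unchanged.
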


\begin{proof}
Suppose that $xy$ is a factor of \bfw such that $\sum x = \sum y$ and $\nabs{x} = \nabs{y}$. 
We have
\[
z_i z_{i+1} \cdots z_{i+\ell-1} \qqtext{and} y = z_{i+\ell} z_{i+\ell+1} \cdots z_{i+2\ell-1} 
\]
for some $\ell\geq 1$. Let $x'$ and $y'$ denote the corresponding decoded words in \bfw, that is to say, 
let
\[
x' = w_{n_{i}} w_{n_{i}+1}\cdots w_{n_{i+\ell}-1} \qqtext{and} y' = w_{n_{i+\ell}} w_{n_{i+\ell}+1}\cdots w_{n_{i + 2\ell}-1}.
\]
Since $\sum x' = \sum y'$, the previous lemma says that $\nabs{x'} \equiv \nabs{y'} \pmod{2N}$
and
\begin{equation}\label{004018042010}
\sum_{h=n_{i}}^{n_{i+\ell}-1} u_{h} = \sum_{h=n_{i + \ell}}^{n_{i + 2 \ell}-1} u_{h}. 
\end{equation}
On the other hand, $\nabs{x'} = n_{i+\ell} - n_{i}$ and $\nabs{y'} = n_{i + 2\ell} - n_{i+\ell}$, so that
\[
-2 N < \nabs{y'} - \nabs{x'} < 2 N.
\]
Therefore $\nabs{x'} = \nabs{y'}$. But this and Eq.~\eqref{004018042010} means that there is an 
Abelian square of length $\nabs{x'y'}$ in \bfu occurring at position $n_{i}$, a contradiction.
\end{proof}

\begin{lemma}
If there exists an infinite binary word satisfying the BAS property, then there exists an infinite word satisfying the PVHH property.
\end{lemma}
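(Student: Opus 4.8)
The plan is to feed a word with the BAS property into the chain of constructions culminating in Lemma~\ref{003218042010}, and then simply read off the conclusion.

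First I would convert the BAS hypothesis into a uniform gap bound. By definition, if a binary word \bfw has the BAS property, then the set $D$ of its positions that avoid Abelian squares has positive lower density, hence---as noted just after the definition---$D$ is syndetic: there is an integer $N$ such that every block of $N$ consecutive positions of \bfw meets $D$. Set $\bfu := \bfw$; then for each $i \geq 0$ I can pick $n_i \in D$ with $iN \leq n_i < (i+1)N$. These $n_i$ form a strictly increasing sequence, and each $n_i$ is a position of \bfu avoiding Abelian squares, so the hypothesis of Lemma~\ref{003218042010} is met with this \bfu and this $N$.

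Next I would invoke Lemma~\ref{003218042010} verbatim (so that $M = 2N$): it produces the integer-valued word \bfw given by $w_n = u_n 2^M + v_n$, the decoded word \bfz with $z_i = w_{n_i} + w_{n_i+1} + \cdots + w_{n_{i+1}-1}$, and the conclusion that \bfz has the PVHH property, i.e.\ no two consecutive equal-length blocks of \bfz have the same sum. The only remaining point, which I would verify in a line, is that \bfz is over a \emph{finite} alphabet, as the notion of PVHH requires: from $iN \leq n_i < (i+1)N$ one gets $1 \leq n_{i+1} - n_i < 2N$, while each $w_h$ lies in $\{0, 2^M\} + \bigl(\{2^0, \ldots, 2^{M-1}\} \cup \{1 - 2^{M-1}\}\bigr)$, so $|w_h| < 2^{M+1}$; therefore $|z_i| < 2N \cdot 2^{M+1}$ for all $i$, and \bfz ranges over a fixed finite set of integers. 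This yields an infinite word over a finite set of integers with the PVHH property, as desired.

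Since essentially all of the work has already been done in Lemma~\ref{003218042010} (and the lemmas preceding it, including Lemma~\ref{221920042010}), there is no genuine obstacle left. The one spot that needs a little care is matching the BAS property to the precise hypothesis of Lemma~\ref{003218042010}: one must extract not merely a syndetic set of good positions but specifically a choice $n_i$ satisfying $iN \leq n_i < (i+1)N$ for a single fixed $N$, since the proof of Lemma~\ref{003218042010} uses exactly this normalization to force $|x'| = |y'|$. Everything after that is bookkeeping.
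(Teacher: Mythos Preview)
Your proposal is correct and follows exactly the paper's own route: convert the BAS hypothesis into the bounded-gap condition required by Lemma~\ref{003218042010}, then invoke that lemma to produce the PVHH word. The paper's proof is a single sentence that omits the finite-alphabet check you spell out; your extra line bounding $\nabs{z_i}$ is a welcome detail but does not represent a different approach.
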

\begin{proof}
If \bfu is an infinite binary word with the BAS property, then it satisfies conditions of the previous lemma for some integer $N$, and therefore a word satisfying the PVHH property exists.
\end{proof}

\section*{Acknowledgements}

The third author is supported by grant no.~134190 from the Finnish Academy.
The fourth author is partially supported by grant no.~090038011 from the Icelandic Research Fund, and by the grant SubTile funded by the A.N.R.

\end{document}